\setlist[itemize]{noitemsep,  leftmargin=*, topsep=0pt}
\newcounter{ass_counter} \newcounter{thm_counter}
\newtheorem{theorem}[thm_counter]{Theorem}
\newtheorem{lemma}[thm_counter]{Lemma}
\newtheorem{assumption}[ass_counter]{Assumption}
\DeclareMathOperator*{\argmin}{arg\,min}
\def\E{\mathbb{E}}
\def\P{\mathcal{P}}
\def\R{\Re}
\def\E{\mathbb{E}}
\def\P{\mathcal{P}}
\newcommand{\beq}{\begin{equation}}
\newcommand{\eeq}{\end{equation}}
\def\1{{\bf{1}}}
\def\0{{\bf{0}}}
\def\I{{\bf{I}}}
\def\w{{\bf w}}
\def\x{{\bf x}}
\def\y{{\bf y}}
\def\z{{\bf z}}
\def\b{{\bf b}}
\def\s{{\bf s}}
\def\prox{\text{prox}}
\def\la{\langle}
\def\ra{\rangle}
\def\Gg{\nabla g}
\def\Gf{\nabla f}
\newcommand{\wb}{\mathbf{w}}
\newcommand{\bphi}{\bm{\phi}}
\newcommand{\bPhi}{\mathbf{\Phi}}
\def\cX{X}
\def\cO{O}
\def\lf{\left}
\def\ri{\right}
\def\SO{{\bf SO}}
\def\sgrad{\nabla}
\def\grad{\nabla}
\title{Accelerating Stochastic Composition Optimization}
\author{
  $^\dag$Mengdi Wang$^*$, $^\ddag$Ji Liu\thanks{Equal contribution.}~, and $^\S$Ethan X. Fang \\
 $^\dag$Princeton University, $^\ddag$University of Rochester, and $^\S$Pennsylvania State University \\
  \texttt{mengdiw@princeton.edu, ji.liu.uwisc@gmail.com, xxf13@psu.edu} \\
}
\begin{document}

\maketitle

\begin{abstract}
Consider the stochastic composition optimization problem where the objective is a composition of two expected-value functions. We propose a new stochastic first-order method, namely the accelerated stochastic compositional proximal gradient (ASC-PG) method, which updates based on queries to the sampling oracle using two different timescales. The ASC-PG is the first proximal gradient method for the stochastic composition problem that can deal with nonsmooth regularization penalty. We show that the ASC-PG exhibits faster convergence than the best known algorithms, and that it achieves the optimal sample-error complexity in several important special cases. We further demonstrate the application of ASC-PG to reinforcement learning  and conduct numerical experiments.
\end{abstract}

\section{Introduction}
\def\cX{X}
\def\cO{O}
\def\lf{\left}
\def\ri{\right}
\def\SO{{\bf SO}}
\def\sgrad{\nabla}
\def\grad{\nabla}

The popular stochastic gradient methods are well suited for minimizing expected-value objective functions or the sum of a large number of loss functions. Stochastic gradient methods find wide applications in estimation, online learning, and training of deep neural networks. Despite their popularity, they do not apply to the minimization of a nonlinear function involving expected values or a composition between two expected-value functions. 

In this paper, we consider the {\it stochastic composition problem}, given by  
\begin{equation}
\min_{\x\in\R^n}\quad H(\x) 
:= \underbrace{\E_v(f_v (\E_w(g_w(\x))))}_{=:F(\x)} + R(\x)
\label{eq:main-problem}
\end{equation}
where $(f\circ g)(\x) = f(g(\x))$ denotes the function composition, $g_w(\cdot):~\R^n \mapsto \R^m$ and $f_v(\cdot):~\R^m\mapsto~\R$ are continuously differentiable functions, $v,w$ are random variables, and $R(\x):~\R^n\mapsto \R\cup \{+\infty\}$ is an extended real-valued closed convex  function.  
We assume throughout that {there exists at least one optimal solution $\x^*$ to problem \eqref{eq:main-problem}}. We focus on the case where $f_v$ and $g_w$ are smooth, but we allow $R$ to be a nonsmooth penalty such as the $\ell_1$-norm. We do no require either the outer function $f_v$ or the inner function $g_w$ to be convex or monotone. The inner and outer random variables $w,v$ can be dependent.

Our algorithmic objective is to develop efficient algorithms for solving problem \eqref{eq:main-problem} based on random evaluations of $f_v$, $g_w$ and their gradients. Our theoretical objective is to analyze the rate of convergence for the stochastic algorithm and to improve it when possible. In the online setting, the iteration complexity of stochastic methods can be interpreted as sample-error complexity of estimating the optimal solution of problem \eqref{eq:main-problem}. 



\subsection{Motivating Examples}

One motivating example is {\it reinforcement learning} \citep{sutton1998reinforcement}. Consider a controllable Markov chain with states $1,\ldots,S$. Estimating the value-per-state of a fixed control policy $\pi$ is known as on-policy learning. It can be casted into an $S\times S$ system of Bellman equations:
$$\gamma P^{\pi} V^{\pi} + r^{\pi} = V^{\pi},$$
where $\gamma \in(0,1)$ is a discount factor, $P_{s\tilde s}^{\pi}$ is the transition probability from state $s$ to state $\tilde s$, and $r_s^{\pi}$ is the expected state transition reward at state $s$. The solution $V^{\pi}$ to the Bellman equation is the value vector, with $V^{\pi}(s)$ being the total expected reward starting at state $s$. In the blackbox simulation environment, $P^{\pi},r^{\pi}$ are unknown but can be sampled from a simulator.
As a result, solving the Bellman equation becomes a special case of the stochastic composition optimization problem:
\begin{align}
\min_{\x\in\Re^S} \quad \|\E[A] \x - \E[\b]\|^2,
\label{eq:rl}
\end{align}
where $A,\b$ are random matrices and random vectors such that $\E[A]=I-\gamma P^{\pi}$ and $\E[\b]=r^{\pi}$. It can be viewed as the composition of the square norm function and the expected linear function.
We will give more details on the reinforcement learning application in Section 4.

Another motivating example is {\it risk-averse learning}. For example, consider the mean-variance minimization problem
$$\min_{\x} \E_{a,b}\quad [ h(\x;a, b)] + {\lambda}\hbox{Var}_{a,b} [h(\x;a, b) ] ,$$
where $h(x;a,b)$ is some loss function parameterized by random variables $a$ and $b$, and $\lambda>0$ is a regularization parameter.
Its batch version takes the form
$$\min_{\x}\quad  \frac1N\sum^N_{i=1} h(\x;a_i, b_i) +  \frac{\lambda}N \sum^N_{i=1}  \lf( h(\x;a_i, b_i) -  \frac1N\sum^N_{i=1} h(\x;a_i, b_i) \ri)^2.$$
Here the variance term is the composition of the mean square function and an expected loss function.
Indeed, the stochastic composition problem (1) finds a broad spectrum of applications in estimation and machine learning. Fast optimization algorithms with theoretical guarantees will lead to new computation tools and online learning methods for a broader problem class, no longer limited to the expectation minimization problem.

\subsection{Related Works and Contributions}

Contrary to the expectation minimization problem,
``unbiased" gradient samples are no longer available for the stochastic composition problem (1). The objective is {\it nonlinear} in the joint probability distribution of $(w,v)$, which substantially complicates the problem. In a recent work by \cite{dentcheva2015statistical}, a special case of the stochastic composition problem, i.e., risk-averse optimization, has been studied. A central limit theorem has been established, showing that the $K$-sample batch problem converges to the true problem at the rate of $\cO(1/\sqrt{K})$ in a proper sense. 
For the case where $R(x)=0$, \cite{wang2014stochastic} has proposed and analyzed a class of {\it stochastic compositional gradient/subgradient methods} (SCGD). The SCGD involves two iterations of different time scales, one for estimating $x^*$ by a stochastic quasi-gradient iteration, the other for maintaining a running estimate of $g(x^*)$. Almost sure convergence and several convergence rate results have been obtained.

The idea of using two-timescale quasi-gradient traced back to the earlier work \cite{Erm76}.  
The incremental treatment of proximal gradient iteration has been studied extensively for the expectation minimization problem, see for examples \cite{NeB01,Ber11a,Ned11,WaB13,beck2009fast,gurbuzbalaban2015convergence,RSS11,ghadimi2015accelerated,ShZ12}.
However, except for \cite{wang2014stochastic}, all of these works focus on the expectation minimization problem and do not apply to the stochastic composition problem \eqref{eq:main-problem}.

In this paper, we propose a new accelerated stochastic compositional proximal gradient (ASC-PG) method that applies to the more general penalized problem \eqref{eq:main-problem}. We use a coupled martingale stochastic analysis to show that ASC-PG achieves significantly better sample-error complexity in various cases. We also show that ASC-PG exhibits optimal sample-error complexity in two important special cases: the case where the outer function is linear and the case where the inner function is linear.

Our contributions are summarized as follows:
\begin{enumerate} [noitemsep,topsep=0pt, leftmargin=*]
\item We propose the first stochastic {\it proximal-gradient} method for the stochastic composition problem. This is the first algorithm that is able to address the nonsmooth regularization penalty $R(\cdot)$ without deteriorating the convergence rate.
\item We obtain a convergence rate $O(K^{-4/9})$ for smooth optimization problems that are not necessarily convex, where $K$ is the number of queries to the stochastic first-order oracle. This improves the best known convergence rate and provides a new benchmark for the stochastic composition~problem.
\item We provide a comprehensive analysis and results that apply to various special cases. In particular, our results contain as special cases the known optimal rate results for the expectation minimization problem, i.e., $\cO(1/\sqrt{K})$ for general objectives and $\cO(1/K)$ for strongly convex objectives. 
\item In the special case where the inner function $g(\cdot)$ is a linear mapping, we show that it is sufficient to use one timescale to guarantee convergence. Our result achieves the non-improvable rate of convergence $\cO(1/{K})$. It implies that the inner linearity does not bring fundamental difficulty to the stochastic composition problem.
\item We show that the proposed method leads to a new on-policy reinforcement learning algorithm. The new learning algorithm achieves the optimal convergence rate $\cO(1/\sqrt{K})$ for solving Bellman equations based on $K$ observed state transitions.
\end{enumerate}


In comparison with \cite{wang2014stochastic}, our analysis is more succinct and leads to stronger results. To the best of our knowledge, results in this paper provide the best-known rates for the stochastic composition~problem. 

\paragraph{Paper Organization.} Section 2 states the sampling oracle and the accelerated stochastic compositional proximal gradient algorithm (ASC-PG). Section 3 states the convergence rate results in the case of  general nonconvex objective and in the case of  strongly convex objective, respectively. Section 4 describes an application of ASC-PG to reinforcement learning and gives numerical experiments. 

\paragraph{Notations  and Definitions.}
For $\x\in\Re^n$, we denote by $\x'$ its transpose, and by $\|\x\|$ its Euclidean norm (i.e., $\|\x\|=\sqrt{\x'\x}$). 
For two sequences $\{\y_k\}$ and $\{\z_k\}$, we write $\y_k=\cO(\z_k)$ if there exists a constant $c>0$ such that $\|\y_k\|\leq c\|\z_k\|$ for each $k$. 
We denote by $\I_{\text{condition}}^{\text{value}}$ the indicator function, which returns ``value'' if the ``condition'' is satisfied; otherwise $0$.
We denote by $H^*$ the optimal objective function value for \eqref{eq:main-problem}, denote by $X^*$ the set of optimal solutions, and denote by $\P_S(\x)$ the Euclidean projection of $\x$ onto $S$ for any convex set $S$. We let $f(\y) = \E_{v}[f_v(\y)]$ and $g(\x) = \E_w[g_w(\x)]$.

\section{Algorithm}

We focus on the blackbox sampling environment. Suppose that we have access to a stochastic first-order oracle, which returns random realizations of first-order information upon queries. This is a typical simulation oracle that is available in both online and batch learning.
More specifically, assume that we are given a {\bf Sampling Oracle} (\SO) such that
\begin{itemize}
\item Given some $\x\in \R^n$, the \SO\ returns a random vector 
$g_w(x)$ and a noisy subgradient 
$\sgrad g_w (\x).$
\item Given some $\y\in\R^m$, the \SO\ returns a noisy gradient $\grad f_v(\y).$
\end{itemize}
Now we propose the Accelerated Stochastic Compositional Proximal Gradient (ASC-PG) algorithm, see Algorithm 1. ASC-PG is a generalization of the SCGD proposed by \citet{wang2014stochastic}, in which a proximal step is used to replace the projection step. 

\begin{algorithm}
\caption{Accelerated Stochastic Compositional Proximal Gradient (ASC-PG)}\label{alg}
\begin{algorithmic}[1]
\REQUIRE $\x_1 \in \R^{n}$, $\y_0 \in \R^m$, \SO, $K$, stepsize sequences $\{\alpha_k\}_{k=1}^K$, and $\{\beta_k\}_{k=1}^K$.
\ENSURE $\{\x_k\}_{k=1}^K$
\FOR{$k=1,\cdots, K$}
\STATE Query the \SO\ and obtain gradient samples $\nabla f_{v_k}(\y_k) $, $\nabla g_{w_k}(\z_k) .$
\STATE Update the main iterate by
\begin{eqnarray*}
\x_{k+1} & = & \prox_{\alpha_k R(\cdot)}\left(\x_k - \alpha_k \nabla g_{w_k}^\top(\x_k) \nabla f_{v_k}(\y_k)\right).
\end{eqnarray*}
\STATE Update auxillary iterates by an {\it extrapolation-smoothing} scheme:
\begin{eqnarray*}
\z_{k+1} &=& \left(1-{1\over \beta_k}\right) \x_k + {1\over \beta_k} \x_{k+1},
\\
\y_{k+1} & = & (1 - \beta_{k})\y_{k} + \beta_{k} g_{w_{k+1}}(\z_{k+1}) ,
\end{eqnarray*}
where the sample $g_{w_{k+1}}(\z_{k+1})$ is obtained via querying the \SO.
\ENDFOR
\end{algorithmic}
\end{algorithm}

In Algorithm 1, the {\it extrapolation-smoothing scheme} (i.e., the $(\y,\z)$-step) is critical for convergence acceleration. The acceleration is due to the fast running estimation of the unknown quantity $g(\x_k):=\E_{w}[g_w(\x_k)]$. At iteration $k$, the running estimate $\y_k$ of $g(\x_k)$ is obtained using a weighted smoothing scheme, corresponding to the $\y$-step; while the new query point $\z_{k+1}$ is obtained through extrapolation, corresponding to the $\z$-step. The updates are constructed in a way such that $\y_k$ is a nearly unbiased estimate of $g(\x_k).$  
To see how the {extrapolation-smoothing scheme} works, we define the weights as
\begin{equation}
\xi_t^{(k)} = 
\begin{cases}
    \beta_t \prod_{i=t+1}^k(1-\beta_i),& \text{if } k > t \geq 0\\
    \beta_k,              & \text{if } k=t\geq 0.
\end{cases}
\end{equation}
Then we can verify the following important relations:
\[
\x_{k+1} = \sum_{t=0}^k \xi_t^{(k)} \z_{t+1},\qquad \y_{k+1} = \sum_{t=0}^k \xi_t^{(k)}  g_{w_{t+1}}(\z_{t+1}).
\]
Now consider the special case where $g_w(\cdot)$ is always a linear mapping $g_w(z) = A_w z+b_z$ and $\beta_k=\xi_t^{(k)} =1/(k+1)$. Then we have 
\[
g(\x_{k+1}) = \frac1{k+1} \sum_{t=0}^k \E[A_w]  \z_{t+1} +\E[\b_w],
\qquad \y_{k+1} = \frac1{k+1}\sum_{t=0}^k   A_{w_{t+1}}\z_{t+1}+ \frac1{k+1}\sum_{t=0}^k   \b_{w_{t+1}}.
\]
In this way, we can see that the scaled error $$k(\y_{k+1} - g(\x_{k+1})) =   \sum_{t=0}^k (A_{w_{t+1}}- \E[A_w] )\z_{t+1}+ \sum_{t=0}^k    (\b_{w_{t+1}} - \E[\b_w] )$$ 
is a {\it zero-mean} and {\it zero-drift} martingale. Under additional technical assumptions, we have
$$\E[\|\y_{k+1} - g(\x_{k+1}) \|^2] \leq \cO\lf(\frac1k\ri).$$
Note that the zero-drift property of the error martingale is the key to the fast convergence rate. The zero-drift property comes from the near-unbiasedness of $\y_k$, which is due to the special construction of the extrapolation-smoothing scheme.
In the more general case where $g_w$ is not necessarily linear, we can use a similar argument to show that $\y_k$ is a nearly unbiased estimate of $g(\x_k)$. As a result, the extrapolation-smoothing ($\y,\z$)-step ensures that $\y_k$ tracks the unknown quantity $g(\x_k)$ efficiently.

\section{Main Results}
We present our main theoretical results in this section. Let us begin by stating our assumptions. Note that all assumptions involving random realizations of $v,w$ hold with probability 1.
\begin{assumption} \label{ass:unb}
The samples generated by the \SO\ are unbiased in the following sense:
\begin{enumerate}
\item $\E_{\{w_k, v_k\}}(\nabla g^\top_{w_k}(\x) \nabla f_{v_k}(\y)) = \nabla g^\top(\x) \nabla f(\y)\quad \forall k=1,2,\cdots, K,\quad \forall \x,\forall \y$.
\item $\E_{w_k}(g_{w_k}(\x)) = g(\x)\quad \forall \x$.
\end{enumerate}
Note that $w_k$ and $v_k$ are not necessarily independent.
\end{assumption}

\begin{assumption} \label{ass:var}
The sample gradients and values generated by the \SO\ satisfy
$$\E_w(\|g_w(\x) - g(\x)\|^2) \leq \sigma^2\quad \forall \x.$$

\end{assumption}

\begin{assumption} \label{ass:sg}
The sample gradients generated by the \SO\ are uniformly bounded, and the penalty function $R$ has bounded gradients.
$$\|\nabla f_v(\x)\| \leq \Theta(1) , \|\nabla g_w(\x)\| \leq \Theta(1) , \|\partial R(\x)\| \leq \Theta(1)  \quad \forall \x, \forall w, \forall v $$
\end{assumption}

\begin{assumption} \label{ass:lp}
There exists $L_F,L_f,L_g>0$ such that
the inner and outer functions satisfying the following Lipschitzian conditions
\begin{enumerate}
\item $F(\z)-F(\x) \leq \la \nabla F(\x), \z- \x \ra + {L_F\over 2} \|\z- \x\|^2 \quad \forall \x~\forall \z$.
\item $\|\nabla f_v(\y) - \nabla f_v(\w)\| \leq L_f\|\y - \w\|\quad \forall \y~\forall \w~\forall v.$
\item $\|g(\x) - g(\z) - \nabla g(\z)^\top (\x -\z)\| \leq {L_g\over 2} \|\x - \z\|^2\quad \forall \x~\forall \z.$
\end{enumerate}
\end{assumption}

Our first main result concerns with general optimization problems which are not necessarily convex. 

\begin{theorem}[Smooth Optimization] \label{thm:nonconvex}
Let
Assumptions~\ref{ass:unb}, \ref{ass:var}, \ref{ass:sg}, and \ref{ass:lp} hold.
Denote by $F(\x) := (\E_v(f_v) \circ \E_w(g_w))(\x)$ for short and suppose that $R(\x)=0$ in \eqref{eq:main-problem} and $\E(F(\x_k))$ is bounded from above. Choose $\alpha_k = k^{-a}$ and $\beta_k = 2k^{-b}$ where $a\in (0,1)$ and $b\in (0,1)$ in Algorithm~\ref{alg}. Then we have
\begin{align}
\frac{\sum_{k=1}^K \E(\|\nabla F(\x_k)\|^2)}{K} \leq 
O(K^{a-1} + L_f^2 L_gK^{4b-4a}\I_{4a-4b=1}^{\log K} + L_f^2  K^{-b}+ K^{-a}).
\label{eq:thm:non:1}
\end{align}
If $L_g\neq 0$ and $L_f\neq 0$, choose $a=5/9$ and $b=4/9$, yielding
\begin{align}
\frac{\sum_{k=1}^K \E(\|\nabla F(\x_k)\|^2)}{K} \leq 
O(K^{-4/9}).
\label{eq:thm:non:2}
\end{align}
If $L_g=0$ or $L_f=0$, then the optimal $a$ and $b$ can be chosen to be $a=b=1/2$, yielding
\begin{align}
\frac{\sum_{k=1}^K \E(\|\nabla F(\x_k)\|^2)}{K} \leq O(K^{-1/2}).
\label{eq:thm:non:3}
\end{align}
\end{theorem}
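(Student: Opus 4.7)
The plan is to bound $\|\nabla F(\x_k)\|^2$ via a descent lemma on the smooth composition $F=f\circ g$, and then to absorb the bias arising because the algorithm sees only $\y_k\approx g(\x_k)$, not $g(\x_k)$ itself. The proof has three stages: a one-step descent inequality, a tracking-error recursion for $e_k:=\y_k-g(\x_k)$, and a final telescoping-and-balancing step.

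First I would apply the $L_F$-smoothness of $F$ (Assumption~\ref{ass:lp}.1) along the iterates. Since $R\equiv 0$, the proximal step reduces to $\x_{k+1}-\x_k=-\alpha_k\u_k$ with $\u_k:=\nabla g_{w_k}^\top(\x_k)\nabla f_{v_k}(\y_k)$. Conditioning on the history through $\y_k$ and applying Assumption~\ref{ass:unb}.1 replaces $\u_k$ by $\nabla g(\x_k)^\top\nabla f(\y_k)$; writing $\nabla F(\x_k)=\nabla g(\x_k)^\top\nabla f(g(\x_k))$, bounding $\|\nabla f(\y_k)-\nabla f(g(\x_k))\|\le L_f\|e_k\|$ via Assumption~\ref{ass:lp}.2, and using bounded gradients (Assumption~\ref{ass:sg}), a single Young's inequality gives
\[
\E[F(\x_{k+1})\mid\mathcal{F}_k]\le F(\x_k)-\tfrac{\alpha_k}{2}\|\nabla F(\x_k)\|^2 + O(\alpha_k L_f^2\|e_k\|^2) + O(\alpha_k^2).
\]

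The heart of the proof is the recursion for $e_k$. Subtracting $g(\x_{k+1})$ from the $\y$-update produces $e_{k+1}=(1-\beta_k)e_k+\mu_k+\beta_k\eta_k$, where $\mu_k:=(1-\beta_k)g(\x_k)+\beta_k g(\z_{k+1})-g(\x_{k+1})$ is a deterministic bias (given the history) and $\eta_k:=g_{w_{k+1}}(\z_{k+1})-g(\z_{k+1})$ is a conditionally zero-mean noise with $\E[\|\eta_k\|^2\mid\mathcal{F}_k]\le\sigma^2$ by Assumption~\ref{ass:var}. The algebraic observation that drives the acceleration is that the extrapolation $\z_{k+1}=(1-1/\beta_k)\x_k+(1/\beta_k)\x_{k+1}$ is engineered so that $(1-\beta_k)\x_k+\beta_k\z_{k+1}=\x_{k+1}$; hence the first-order terms in a Taylor expansion of $g$ about $\x_k$ cancel exactly inside $\mu_k$, leaving only second-order Lipschitz remainders from Assumption~\ref{ass:lp}.3. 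Combined with $\|\x_{k+1}-\x_k\|=O(\alpha_k)$ and $\|\z_{k+1}-\x_k\|=O(\alpha_k/\beta_k)$, this yields the sharp bias bound $\|\mu_k\|=O(L_g\alpha_k^2/\beta_k)$. A Young's inequality with weight $\beta_k$ on $\langle e_k,\mu_k\rangle$ then gives
\[
\E\|e_{k+1}\|^2\le(1-\beta_k)\E\|e_k\|^2 + O(L_g^2\alpha_k^4/\beta_k^3) + O(\sigma^2\beta_k^2),
\]
and a standard sequence lemma with $\alpha_k=k^{-a}$, $\beta_k=2k^{-b}$ produces $\E\|e_k\|^2=O(L_g^2 k^{4b-4a}+\sigma^2 k^{-b})$, with a $\log K$ correction exactly at the borderline where the bias driver becomes logarithmically summable.

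To finish, I would sum the descent inequality from $k=1$ to $K$, substitute the tracking-error bound into $\sum_k\alpha_k\E\|e_k\|^2$, and divide by $\sum_k\alpha_k=\Theta(K^{1-a})$. The four resulting terms are exactly those in \eqref{eq:thm:non:1}: $K^{a-1}$ from the telescoped objective gap, $L_f^2 L_g K^{4b-4a}$ (with the log at the borderline) from the bias contribution, $L_f^2 K^{-b}$ from the noise contribution, and $K^{-a}$ from the $O(\alpha_k^2)$ remainder. Setting $a=5/9,\,b=4/9$ equates the first three at $K^{-4/9}$ and dominates the fourth, giving \eqref{eq:thm:non:2}; when $L_g=0$ or $L_f=0$ the bias term vanishes and the optimal balance becomes $a=b=1/2$, giving \eqref{eq:thm:non:3}. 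The main obstacle is establishing the sharp bias estimate $\|\mu_k\|=O(L_g\alpha_k^2/\beta_k)$: a naive Lipschitz bound applied directly to $g(\z_{k+1})-g(\x_{k+1})$ yields only $O(L_g\alpha_k^2/\beta_k^2)$, which is too weak by a factor $1/\beta_k$ and would visibly worsen the exponent $4b-4a$ in the final rate. Exploiting the exact first-order cancellation built into the extrapolation-smoothing $(\y,\z)$-step is what delivers the acceleration over the SCGD-style baseline.
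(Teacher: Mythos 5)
Your overall architecture matches the paper's: a descent inequality from the $L_F$-smoothness of $F$, a bias term controlled by $L_f\|\y_k-g(\x_k)\|$ via the bounded-gradient and Lipschitz assumptions, a bound of the form $\E\|\y_k-g(\x_k)\|^2=O(L_g\,k^{4b-4a}+k^{-b})$ (the paper's Lemma~\ref{lem:y}), and a final summation. Where you genuinely diverge is in how you prove the tracking-error bound: the paper represents $\y_{k+1}=\sum_{t}\xi_t^{(k)}g_{w_{t+1}}(\z_{t+1})$ and $\x_{k+1}=\sum_t\xi_t^{(k)}\z_{t+1}$ and invokes Lemmas~10--11 of \citet{wang2014stochastic} to split the error into a weighted-dispersion term $m_k$ (itself controlled through an auxiliary sequence $q_k$) and a martingale term $n_k$, whereas you run a direct one-step recursion on $e_k=\y_k-g(\x_k)$ and isolate the bias $\mu_k=(1-\beta_k)g(\x_k)+\beta_k g(\z_{k+1})-g(\x_{k+1})$, using the exact identity $(1-\beta_k)\x_k+\beta_k\z_{k+1}=\x_{k+1}$ to cancel the first-order Taylor terms and get $\|\mu_k\|=O(L_g\alpha_k^2/\beta_k)$. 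This is correct, self-contained, and arguably cleaner than importing the external lemmas; it reaches the same exponents (your constant is $L_g^2$ rather than $L_g$ in front of $k^{4b-4a}$, which is immaterial for the rate and for the vanishing of that term when $L_g=0$). One small misattribution: the $\log K$ in \eqref{eq:thm:non:1} does not arise inside the tracking recursion (the sequence lemma gives a clean $k^{-a+b}$ decay); it arises only in the final sum $K^{-1}\sum_k k^{-4a+4b}$ at the borderline $4a-4b=1$.

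The concrete problem is your last step. If you sum $\alpha_k\E\|\nabla F(\x_k)\|^2\le \E F(\x_k)-\E F(\x_{k+1})+O(\alpha_k L_f^2\E\|e_k\|^2)+O(\alpha_k^2)$ and then pass to the uniform average by dividing by $\sum_k\alpha_k$ (equivalently, lower-bounding $\alpha_k\ge\alpha_K=K^{-a}$), you pay for the mismatch between the weighted and unweighted averages. At $a=5/9,\ b=4/9$ the bias contribution becomes $K^{a-1}\sum_k\alpha_k k^{-4a+4b}=K^{-4/9}\sum_k k^{-1}=K^{-4/9}\log K$, and the noise contribution $K^{a-1}\sum_k k^{-a-b}$ is likewise $K^{-4/9}\log K$; at $a=b=1/2$ the noise term again gives $K^{-1/2}\log K$. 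So the route you describe proves \eqref{eq:thm:non:2} and \eqref{eq:thm:non:3} only up to an extra $\log K$ factor. The paper avoids this by multiplying the one-step inequality by $\alpha_k^{-1}=k^a$ \emph{before} summing, so that each error term is summed with its natural weight ($K^{-1}\sum_k k^{-4a+4b}=O(K^{4b-4a})$ when $4a-4b<1$, with the log only at $4a-4b=1$). The price is that $\sum_k k^a(\E F(\x_k)-\E F(\x_{k+1}))$ no longer telescopes; it is handled by summation by parts, the concavity bound $(k+1)^a-k^a\le a k^{a-1}$, and the theorem's explicit hypothesis that $\E F(\x_k)$ is bounded from above --- an assumption your version never uses, which is a signal that you are not exploiting the weighting the statement was designed for. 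Your argument is repairable by adopting this reweighting, but as written the balancing step does not deliver the stated rates.
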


The result of Theorem 1 strictly improves the corresponding results in \citet{wang2014stochastic}. First the result in \eqref{eq:thm:non:2} improves the convergence rate from $O(k^{-2/7})$ to $O(k^{-4/9})$ for the general case. This improves the best known convergence rate and provides a new benchmark for the stochastic composition problem.  

Our second main result concerns strongly convex objective functions. We say that the objective function $H$ is {\it optimally strongly convex} with parameter $\lambda>0$ if 
\begin{eqnarray}
H(\x) - H(\P_{X^*}(\x)) \geq \lambda \|\x-\P_{X^*}(\x)\|^2\quad \forall \x.
\label{eq:osc}
\end{eqnarray}
(see \citet{liu2015asynchronous}).
Note that any strongly convex function is optimally strongly convex, but the reverse does not hold. For example, the objective function \eqref{eq:rl} in on-policy reinforcement learning is always optimally strongly convex (even if $\E(A)$ is a rank deficient matrix), but not necessarily strongly convex. 

\begin{theorem}\label{thm:strconvex} (Strongly Convex Optimization)
Suppose that the objective function $H(\x)$ in \eqref{eq:main-problem} is optimally strongly convex with parameter $\lambda>0$ defined in \eqref{eq:osc}. Set $\alpha_k = C_ak^{-a}$ and $\beta_k = C_b k^{-b}$ where $C_a>4\lambda$, $C_b>2$, $a\in (0,1]$, and $b\in (0, 1]$ in Algorithm~\ref{alg}. Under Assumptions~\ref{ass:unb}, \ref{ass:var}, \ref{ass:sg}, and \ref{ass:lp}, we have 
\begin{equation}
\label{eq:thm:osc:1}
\E(\|\x_k -\P_{X^*}(\x_k)\|^2) \leq O\left(k^{-a} + L_f^2 L_gk^{-4a+4b} + L_f^2 k^{-b}\right).
\end{equation}
If $L_g\neq 0$ and $L_f\neq 0$, choose $a=1$ and $b=4/5$, yielding 
\begin{align}
\E(\|\x_k -\P_{X^*}(\x_k)\|^2) \leq O(k^{-4/5}).
\label{eq:thm:osc:2}
\end{align}
If $L_g=0$ or $L_f=0$, choose $a=1$ and $b=1$, yielding 
\begin{align}
\E(\|\x_k -\P_{X^*}(\x_k)\|^2) \leq O(k^{-1}).
\label{eq:thm:osc:3}
\end{align}
\end{theorem}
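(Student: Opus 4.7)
The plan is to follow the classical two-timescale stochastic approximation template, but to carefully exploit the near-unbiasedness of $\y_k$ built into the extrapolation-smoothing step. I will set up two coupled recursions, one for the iterate error $a_k := \E\|\x_k-\P_{X^*}(\x_k)\|^2$ and one for the tracking error $b_k := \E\|\y_k - g(\x_k)\|^2$, and then solve them jointly.

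First, I would derive the recursion for $a_k$. Starting from the proximal update, the standard nonexpansiveness of $\prox_{\alpha_k R}$ together with optimal strong convexity \eqref{eq:osc} yields, after taking conditional expectations,
\begin{equation*}
a_{k+1} \leq (1-c\lambda\alpha_k)\,a_k \;+\; C_1\alpha_k^2 \;+\; C_2\,\alpha_k\,\E\|\nabla g_{w_k}^\top(\x_k)\nabla f_{v_k}(\y_k) - \nabla F(\x_k)\|^2.
\end{equation*}
Using Assumption~\ref{ass:unb} (unbiasedness of the product gradient), Assumption~\ref{ass:sg} (uniform boundedness), and the Lipschitz property of $\nabla f_v$ from Assumption~\ref{ass:lp}, I would split the gradient error into a mean-zero martingale part (contributing $O(\alpha_k^2)$) and a bias part controlled by $L_f^2\,\E\|\y_k - g(\x_k)\|^2 = L_f^2 b_k$. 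This gives the clean inequality
\begin{equation*}
a_{k+1} \leq (1-c\lambda\alpha_k)\,a_k + O(\alpha_k^2) + O(\alpha_k L_f^2 b_k).
\end{equation*}

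Second, I would derive the recursion for $b_k$, which is where the zero-drift structure is essential. The $\y$-step rewrites as $\y_{k+1} - g(\x_{k+1}) = (1-\beta_k)(\y_k - g(\x_k)) + \beta_k(g_{w_{k+1}}(\z_{k+1}) - g(\z_{k+1})) + [(1-\beta_k)g(\x_k) + \beta_k g(\z_{k+1}) - g(\x_{k+1})]$. The middle term is a martingale difference of size $O(\beta_k\sigma)$ by Assumption~\ref{ass:var}. The bracketed ``drift'' term is where the extrapolation $\z_{k+1} = (1-1/\beta_k)\x_k + (1/\beta_k)\x_{k+1}$ does its work: a Taylor expansion using the second-order Lipschitzian property of $g$ in Assumption~\ref{ass:lp}(3) (linear terms cancel exactly by construction) bounds it by $O(L_g \|\x_{k+1}-\x_k\|^2/\beta_k) = O(L_g \alpha_k^2/\beta_k)$. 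Squaring, expanding, and using the conditional mean-zero property of the martingale term to kill cross terms yields
\begin{equation*}
b_{k+1} \leq (1-\beta_k)^2 b_k + O(\beta_k^2) + O(L_g^2\alpha_k^4/\beta_k^2) + (\text{cross terms controlled by AM-GM}).
\end{equation*}

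Third, with $\alpha_k = C_a k^{-a}$ and $\beta_k = C_b k^{-b}$, I would invoke a standard discrete Gronwall/Chung-type lemma (as in stochastic approximation) to solve each recursion. Because the contraction coefficient in the $a_k$ recursion is $1 - c\lambda C_a k^{-a}$ with $C_a > 4\lambda$ large enough, solving gives $a_k = O(k^{-a}) + O(L_f^2 \cdot \sup_{j\leq k} b_j)$ up to logarithmic matches. Similarly, $b_k = O(k^{-b}) + O(L_g^2 k^{-4a+4b})$. Substituting produces exactly \eqref{eq:thm:osc:1}. Finally, optimizing: for $L_fL_g\neq 0$ the three terms $k^{-a}$, $k^{-4a+4b}$, $k^{-b}$ are balanced at $a=1$, $b=4/5$ giving \eqref{eq:thm:osc:2}; when $L_g=0$ or $L_f=0$ the middle term vanishes and $a=b=1$ is admissible, giving \eqref{eq:thm:osc:3}.

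The main obstacle I anticipate is rigorously establishing the $b_k$ recursion, specifically verifying that the drift term genuinely is $O(L_g\alpha_k^2/\beta_k)$ rather than $O(\alpha_k/\beta_k)$; this hinges on the exact cancellation of first-order terms produced by the choice $\z_{k+1} = (1-1/\beta_k)\x_k + (1/\beta_k)\x_{k+1}$, which is precisely the design reason for the extrapolation-smoothing scheme. Once this quadratic-in-$\alpha_k$ drift is secured, the coupled recursion analysis and the rate optimization are mechanical.
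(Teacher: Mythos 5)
Your overall architecture --- a contraction recursion for $a_k=\E\|\x_k-\P_{X^*}(\x_k)\|^2$ coupled with a tracking recursion for $b_k=\E\|\y_k-g(\x_k)\|^2$, both solved by a Chung-type sequence lemma --- is exactly the paper's proof (the sequence lemma is its Lemma~\ref{lem:gen-seq}, and the bound $b_k=O(k^{-b}+L_gk^{-4a+4b})$ is its Lemma~\ref{lem:y}). Your treatment of $b_k$ is the one genuinely different piece, and it is valid and arguably cleaner: you expand $\y_{k+1}-g(\x_{k+1})$ in a single step and show the drift term is $O(L_g\alpha_k^2/\beta_k)$ via the exact first-order cancellation $(1-\beta_k)\x_k+\beta_k\z_{k+1}=\x_{k+1}$, whereas the paper routes through the weighted-history quantities $m_k=\sum_t\xi_t^{(k)}\|\x_{k+1}-\z_{t+1}\|^2$ and $n_k$ and imports Lemmas~10--11 of \citet{wang2014stochastic}. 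Both give the same rate; yours is more self-contained, the paper's avoids re-deriving the martingale bookkeeping. (Minor check: after AM-GM on the cross term with $\y_k-g(\x_k)$, the drift enters the $b_k$ recursion as $O(L_g^2\alpha_k^4/\beta_k^3)$, which Lemma~\ref{lem:gen-seq} turns into $O(L_g^2k^{-4a+4b})$ --- consistent with what you claim.)

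The one step that would fail as written is the derivation of the $a_k$ recursion from ``nonexpansiveness of $\prox_{\alpha_k R}$ together with optimal strong convexity.'' Optimal strong convexity \eqref{eq:osc} is a statement about function values of $H$ relative to the solution set; it does not imply strong monotonicity of $\nabla F$ or of $\partial H$, which is what the nonexpansiveness route actually needs to produce a contraction factor $(1-c\lambda\alpha_k)$. The paper instead expands $\|\x_{k+1}-\P_{X^*}(\x_{k+1})\|^2\le\|\x_{k+1}-\P_{X^*}(\x_k)\|^2$ via the identity $\|a+b\|^2=\|b\|^2-\|a\|^2+2\la a,\,a+b\ra$, uses the descent lemma on $F$ and the convexity of $R$ (to absorb the prox subgradient $\s_{k+1}$) so that the function-value gap $2\alpha_k(\E H(\x_{k+1})-H^*)$ appears on the left-hand side, and only then applies \eqref{eq:osc} to convert that gap into $2\lambda\alpha_k\,\E\|\x_{k+1}-\P_{X^*}(\x_{k+1})\|^2$. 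Your target inequality $a_{k+1}\le(1-c\lambda\alpha_k)a_k+O(\alpha_k^2)+O(\alpha_k L_f^2 b_k)$ is correct and is precisely what the paper obtains, but under the stated (weaker-than-strongly-convex) assumption you must take this function-value detour to reach it; once that is patched, the splitting of the gradient error into the mean-zero part (Assumption~\ref{ass:unb}) and the $L_f^2 b_k$ bias part, and the final rate optimization over $a,b$, go through exactly as you describe.
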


Let us discuss the results of Theorem 2. In the general case where $L_f\neq 0$ and $L_g\neq 0$, the convergence rate in \eqref{eq:thm:osc:2} is consistent with the result of \citet{wang2014stochastic}. 
Now consider the special case where $L_g=0$, i.e., the inner mapping is linear. This result finds an immediate application to Bellman error minimization problem \eqref{eq:rl} which arises from reinforcement learning problem in (and with $\ell_1$ norm regularization). The proposed ASC-PG algorithm is able to achieve the optimal rate $O(1/K)$ without any assumption on $f_v$. To the best of our knowledge, this is the best (also optimal) sample-error complexity for on-policy reinforcement learning.   

\paragraph{Remarks} Theorems 1 and 2 give important implications about the special cases where $L_f = 0$ or $L_g=0$. In these cases, we argue that our convergence rate \eqref{eq:thm:osc:3} is ``{\it optimal}" with respect to the sample size $k$. To see this, it is worth pointing out the the $\cO(1/K)$ rate of convergence is optimal for strongly convex expectation minimization problem. Because the expectation minimization problem is a special case of problem \eqref{eq:main-problem}, the $\cO(1/K)$ convergence rate must be optimal for the stochastic composition problem too.  
\begin{itemize} [noitemsep,topsep=0pt]
\item
Consider the case where $L_f=0$, which means that the outer function $f_v(\cdot)$ is linear with probability 1. Then the stochastic composition problem \eqref{eq:main-problem} reduces to an expectation minimization problem since $(\E_v f_v \circ \E_wg_w)(\x) = \E_v (f_v (\E_w g_w(\x))) = \E_v \E_w (f_v\circ g_w)(\x)$. Therefore, it makes a perfect sense to obtain the optimal convergence rate. 
\item
Consider the case where $L_g=0$, which means that the inner function $g(\cdot)$ is a linear mapping. The result is quite surprising.  Note that even $g(\cdot)$ is a linear mapping, it does not reduce problem \eqref{eq:main-problem} to an expectation minimization problem. However, the ASC-PG still achieves the optimal convergence rate. This suggests that, when inner linearity holds, the stochastic composition problem \eqref{eq:main-problem} is not fundamentally more difficult than the expectation minimization problem. 
\end{itemize}
The convergence rate results unveiled in Theorems 1 and 2 are the best  known results for the composition problem. We believe that they provide important new result which provides insights into the complexity of the stochastic composition problem.

\section{Application to Reinforcement Learning}
  \begin{figure}[ht!]
\centering
\subfigure{
\includegraphics[width = 0.38\textwidth]{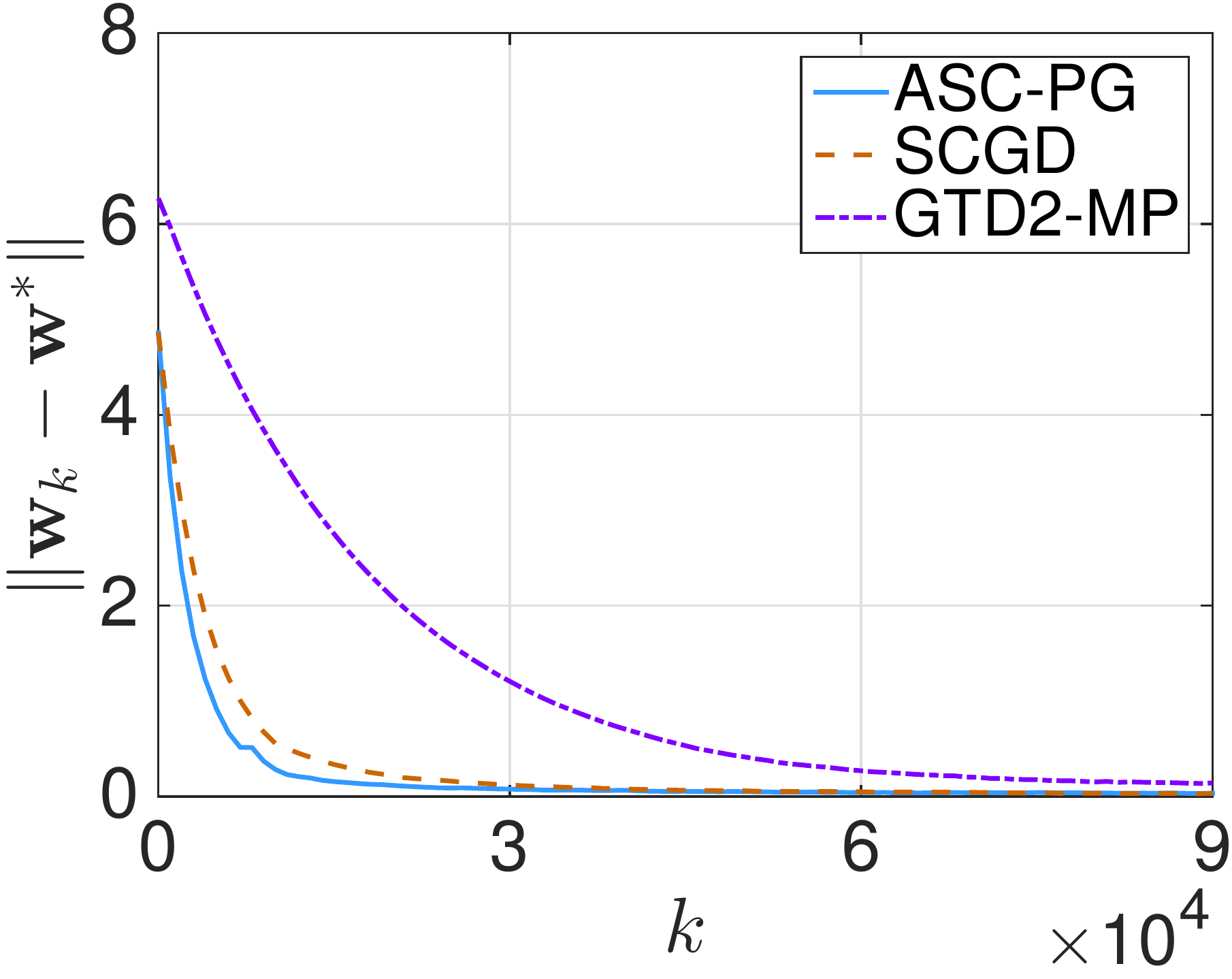}
}
\subfigure{
\includegraphics[width = 0.38\textwidth]{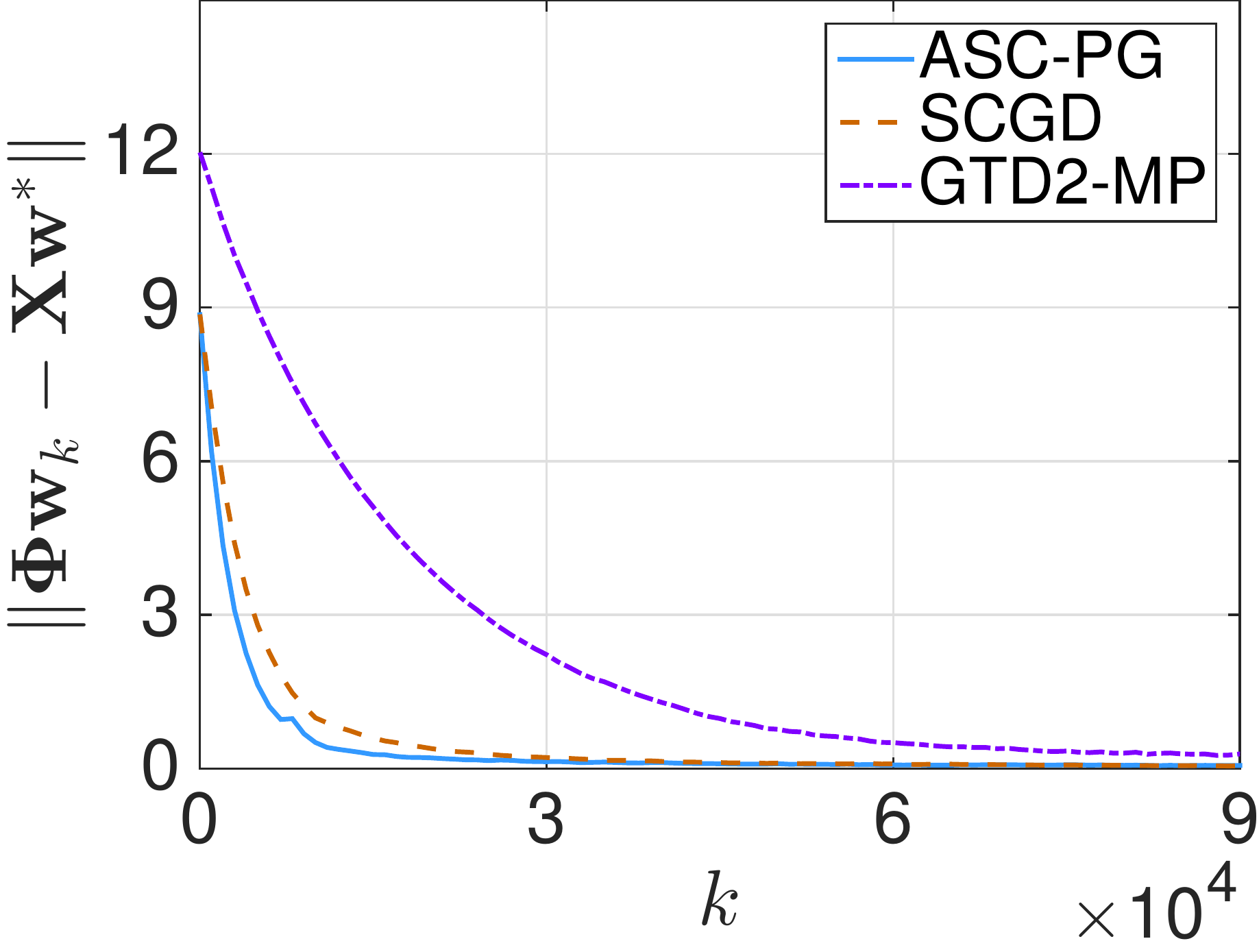}
}\\
\subfigure{
\includegraphics[width = 0.38\textwidth]{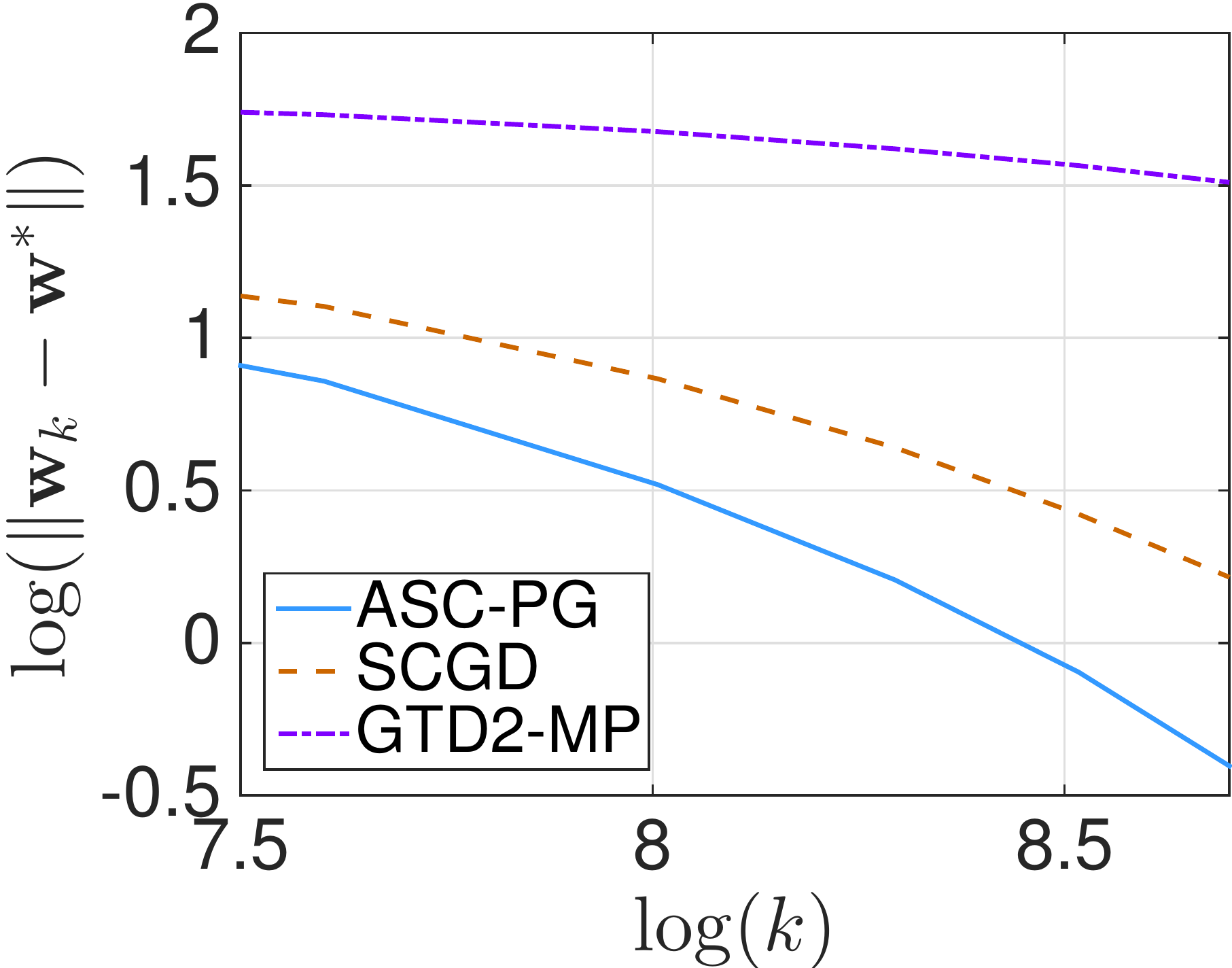}
}
\subfigure{
\includegraphics[width = 0.38\textwidth]{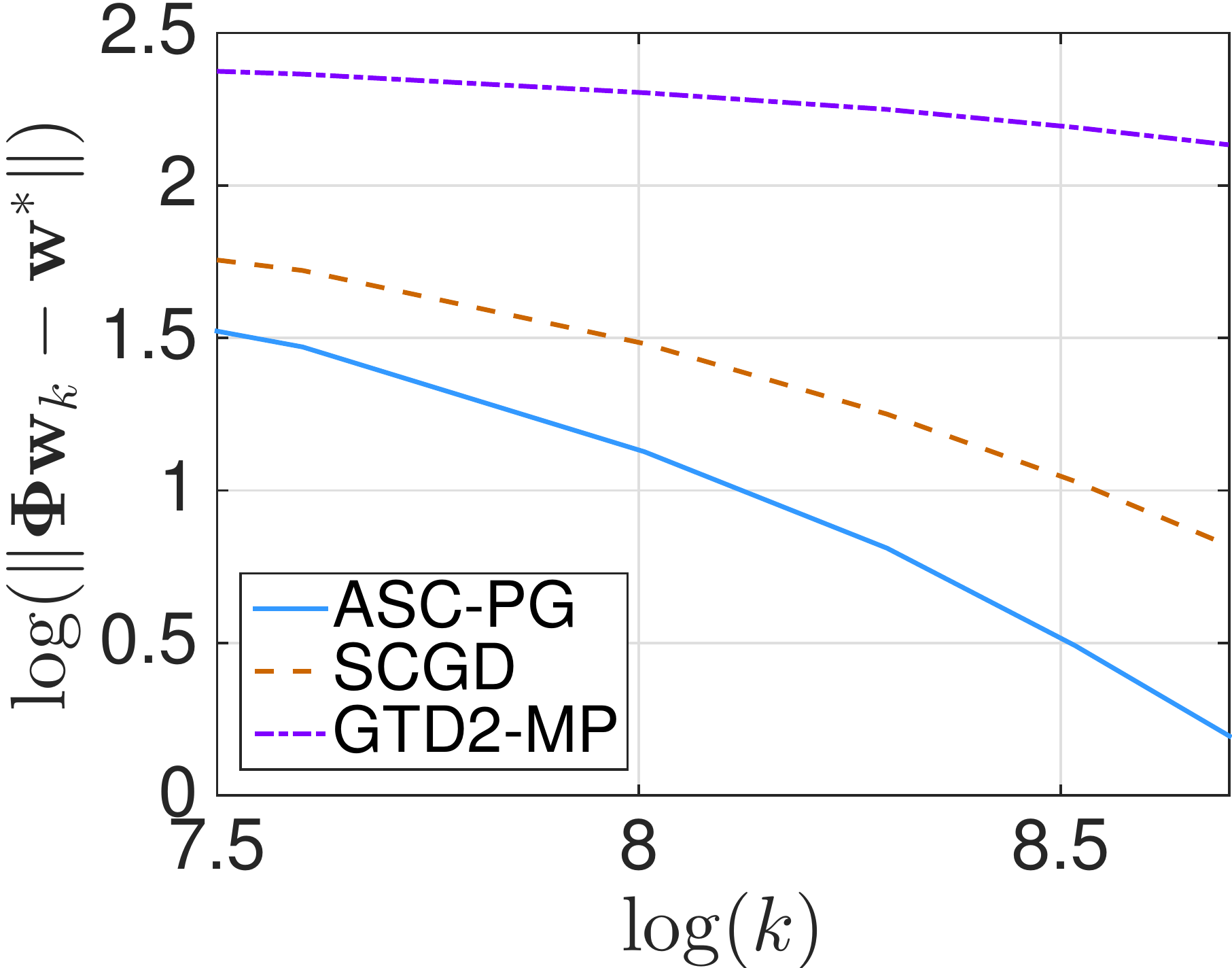}
}
\caption{Empirical convergence rate of the ASC-PG algorithm and the GTD2-MP algorithm under Experiment 1 averaged over 100 runs, where $\wb_k$ denotes the solution at the $k$-th iteration.}
\vspace{-3mm}
\label{fig:baird}
\end{figure}

In this section, we apply the proposed ASC-PG algorithm to conduct policy value evaluation in 
reinforcement learning through attacking Bellman equations. Suppose that there are in total $S$ states. Let the policy of interest be $\pi$. Denote the value function of states by $V^\pi\in\R^S$, where $V^\pi(s)$ denotes the value of being at state $s$ under policy $\pi$. The Bellman equation of the problem is
$$
V^{\pi}(s_1) = \E_\pi\big\{ r_{s_1,s_2} + \gamma \cdot V^{\pi}(s_2)\big| s_1\big\} \text{ for all }s_1,s_2\in\{1,...,S\},
$$
where $r_{s_1,s_2}$ denotes the reward of moving from state $s_1$ to $s_2$, and  the expectation is taken over all possible future state $s_2$ conditioned on current state $s_1$ and the policy $\pi$. We have that the solution $V^*\in\R^S$ to the above equation satisfies that $V^* = V^\pi$.  Here a moderately large $S$ will make solving the Bellman equation directly impractical. To resolve the curse of dimensionality, in many practical applications, we approximate the value of each state by some linear map of its feature $\bphi_s\in \R^m$, where $d < S$ to reduce the dimension. In particular, we assume that $V^\pi(s) \approx \bphi_s^T\wb^*$  for some $\wb^*\in \R^m$. 

To compute $\wb^*$, we formulate the problem as a Bellman residual minimization problem that 
$$
\min_{\wb} \sum_{s=1}^S(\bphi_s^T\wb - q_{\pi,s'}(\wb) )^2, 
$$
where 
$q_{\pi,s'}(\wb) = \E_\pi\big\{r_{s,s'} +  \gamma\cdot \bphi_{s'}\wb   \big\} = \sum_{s'}P^{\pi}_{ss'} (\{r_{s,s'} +  \gamma\cdot \bphi_{s'}\wb)$; $\gamma < 1$ is a discount factor, and $r_{s,s'}$ is the random reward of transition from state $s$ to state $s'$. It is clearly seen that the proposed ASC-PG algorithm could be directly applied to solve this problem where we take 
\begin{gather*}
g(\wb) = \big(\bphi_1^T\wb,q_{\pi,1}(\wb),...,\bphi_S^T\wb,q_{\pi,S}(\wb)\big) \in \R^{2S},\\
f\Big( \big(\bphi_1^T\wb,q_{\pi,1}(\wb),...,\bphi_S^T\wb,q_{\pi,S}(\wb)\big)\Big) =  \sum_{s=1}^S(\bphi_s\wb - q_{\pi,s'}(\wb) )^2\in \R.
\end{gather*}
We point out that  the $g(\cdot)$ function here is a linear map. By our theoretical analysis, we expect to achieve a faster $\cO(1/k)$ rate of convergence, which is justified empirically in our later simulation study.

We consider three experiments, where in the first two experiments, 
we compare our proposed accelerated ASC-PG algorithm with SCGD algorithm \citep{wang2014stochastic} and the recently proposed GTD2-MP algorithm \citep{liu2015finite}. Also, in the first two experiments, we do not add any regularization term, i.e. $R(\cdot) = 0$. In the third experiment, we add an $\ell_1$-penalization term $\lambda\|\wb\|_1$. In all cases, we choose the step sizes via comparison studies as in \cite{dann2014policy}: 

\begin{itemize}
\item Experiment 1: We use the Baird's example \citep{baird1995residual}, which is a well-known example to test the off-policy convergent algorithms. This example contains $S=6$ states, and two actions at each state. We refer the readers to \cite{baird1995residual} for more detailed information of the example. 
\item Experiment 2: We  generate a Markov decision problem (MDP) using similar setup as in \cite{white2016investigating}. In each instance, we randomly generate an MDP  which contains $S = 100$ states, and three actions at each state. The dimension of the  Given one state and one action, the agent can move to one of four next possible states. In our simulation, we generate the transition probabilities for each MDP instance uniformly from $[0,1]$ and normalize the sum of transitions to one, and we generate the reward for each transition also uniformly in $[0,1]$. 
\item Experiment 3: We generate the data same as Experiment 2 except that we have a larger $d = 100$ dimensional feature space, where only the first $4$ components of $\wb^*$ are non-zeros. 
We add an $\ell_1$-regularization term, $\lambda\|\wb\|_1$, to the objective function. 
\end{itemize}


Denote by $\wb_k$ the solution at the $k$-th iteration. 
For the first two experiments, we report the empirical convergence performance $\|\wb_k - \wb^*\|$ and $\|\bPhi\wb_k - \bPhi\wb^*\|$, where $\bPhi = (\bphi_1,...,\bphi_S)^T\in\R^{S\times d}$ and $\bPhi\wb^* = V$, and all $\wb_k$'s  are averaged over 100 runs,  in the first two subfigures of Figures \ref{fig:baird} and \ref{fig:mdp}. It is seen that  the ASC-PG algorithm achieves the fastest convergence rate empirically in both experiments.  
To further evaluate our theoretical results, we plot $\log(t)$ vs. $\log(\|\wb_k - \wb^*\|)$ (or $\log(\|\bPhi\wb_k - \bPhi^*\|)$ averaged over 100 runs  for the first two experiments in the second two subfigures of Figures \ref{fig:baird} and \ref{fig:mdp}. The empirical results further support our theoretical analysis that $\|\wb_k - \wb^*\|^2 = \cO(1/k)$ for the ASC-PG algorithm when $g(\cdot)$ is a linear mapping.

For Experiment 3, as the optimal solution is unknown, we run the ASC-PG algorithm for one million iterations and take the corresponding solution as the optimal solution $\hat\wb^*$, and we report $\|\wb_k - \hat\wb^*\|$ and $\|\bPhi\wb_k - \bPhi\hat\wb^*\|$ averaged over 100 runs in Figure \ref{fig:l1}. It is seen the the ASC-PG algorithm achieves fast empirical convergence rate.


  \begin{figure}[ht!]
\centering
\subfigure{
\includegraphics[width = 0.38\textwidth]{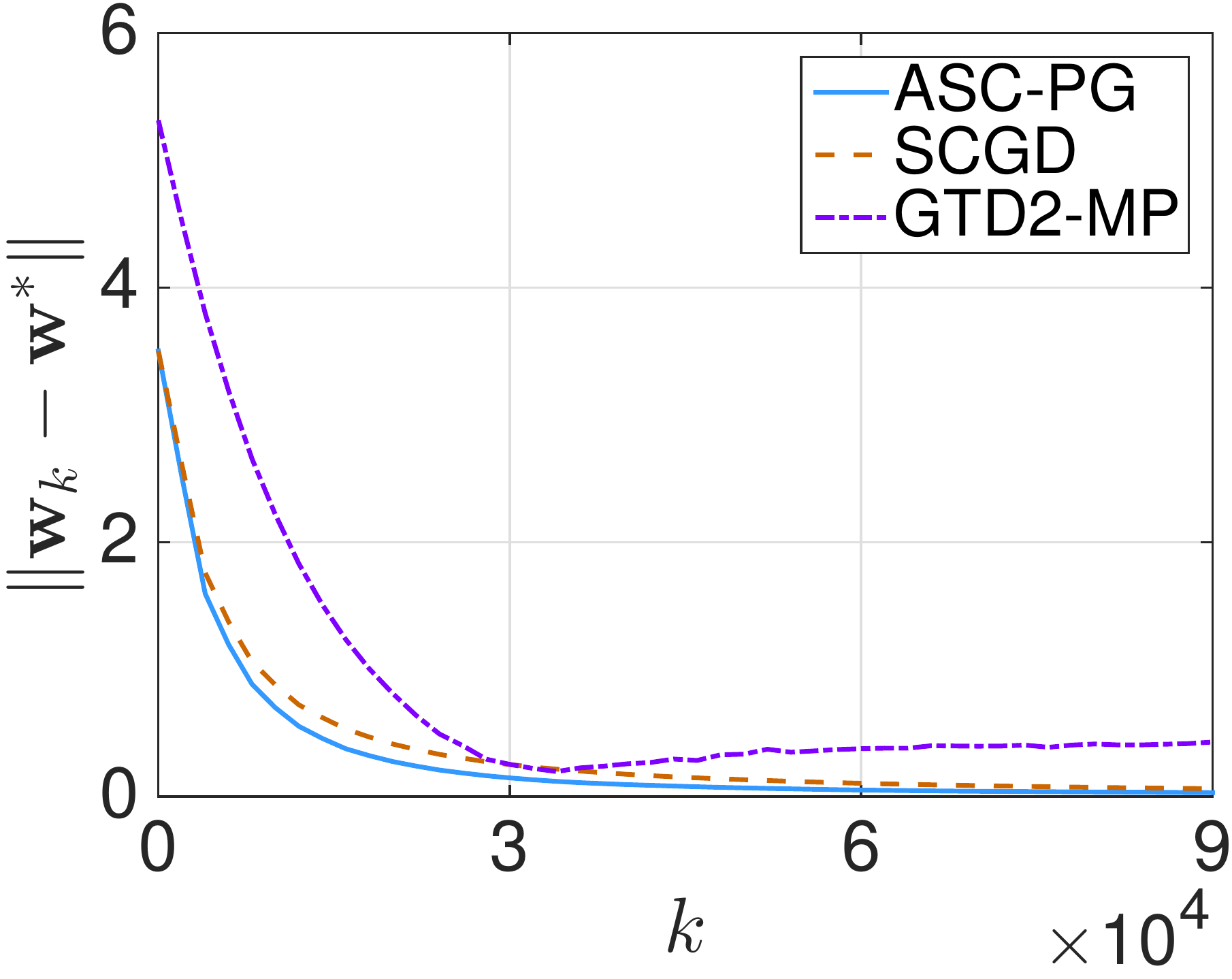}
}
\subfigure{
\includegraphics[width = 0.38\textwidth]{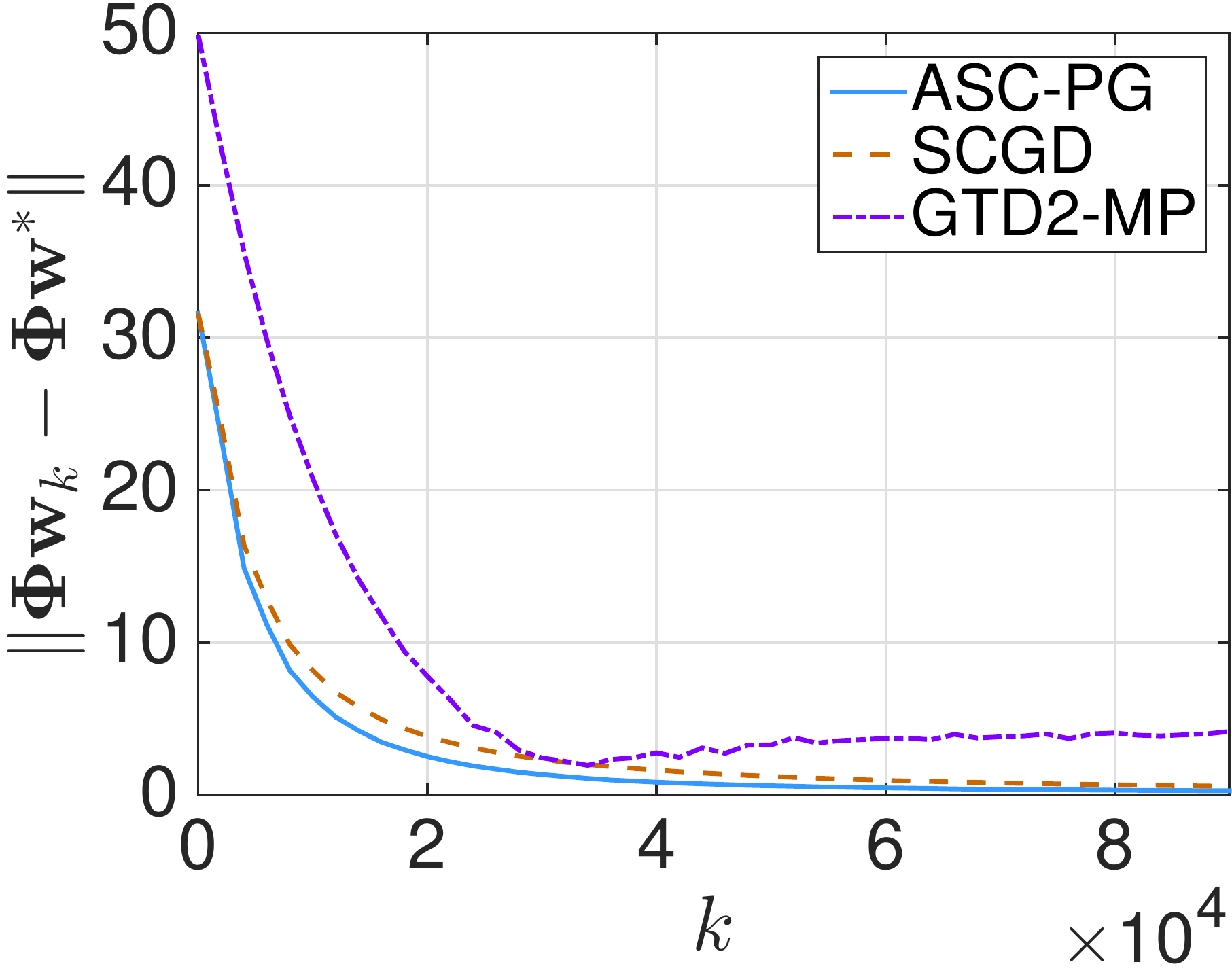}
}\\
\subfigure{
\includegraphics[width = 0.38\textwidth]{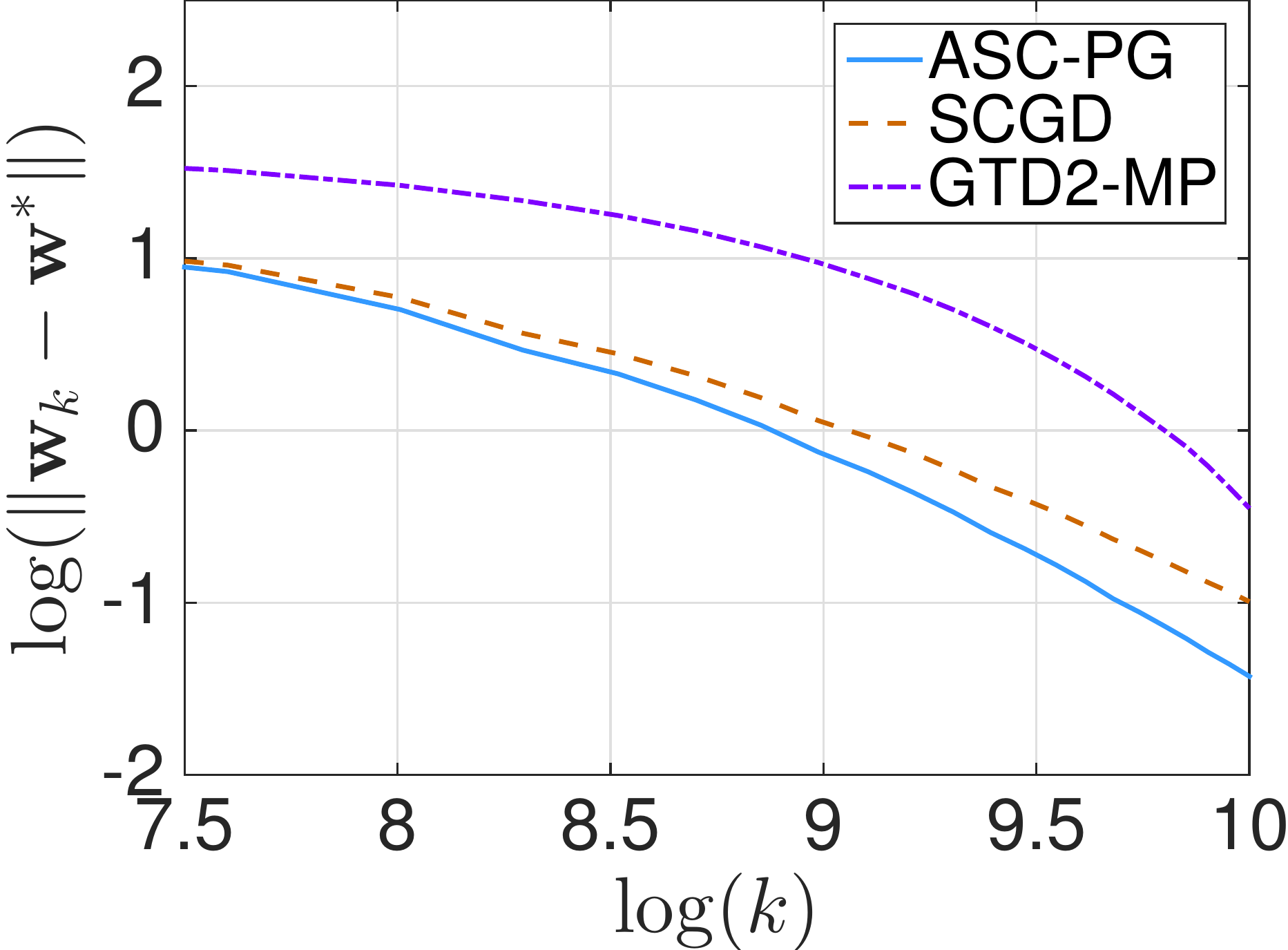}
}
\subfigure{
\includegraphics[width = 0.38\textwidth]{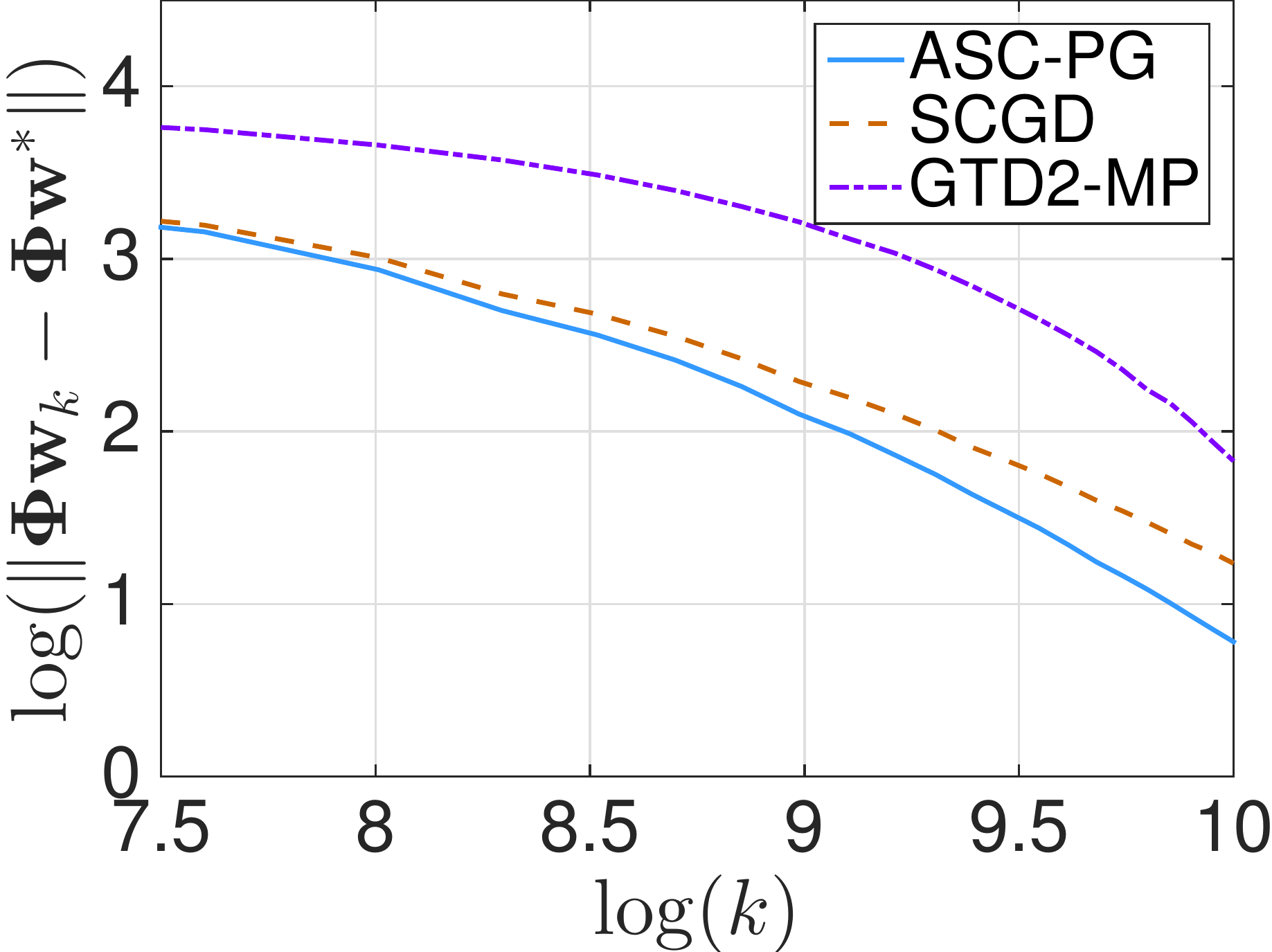}
}
\caption{Empirical convergence rate of the ASC-PG algorithm and the GTD2-MP algorithm under Experiment 2 averaged over 100 runs, where $\wb_k$ denotes the solution at the $k$-th iteration.}
\label{fig:mdp}
\end{figure}

  \begin{figure}[ht!]
\centering
\subfigure{
\includegraphics[width = 0.38\textwidth]{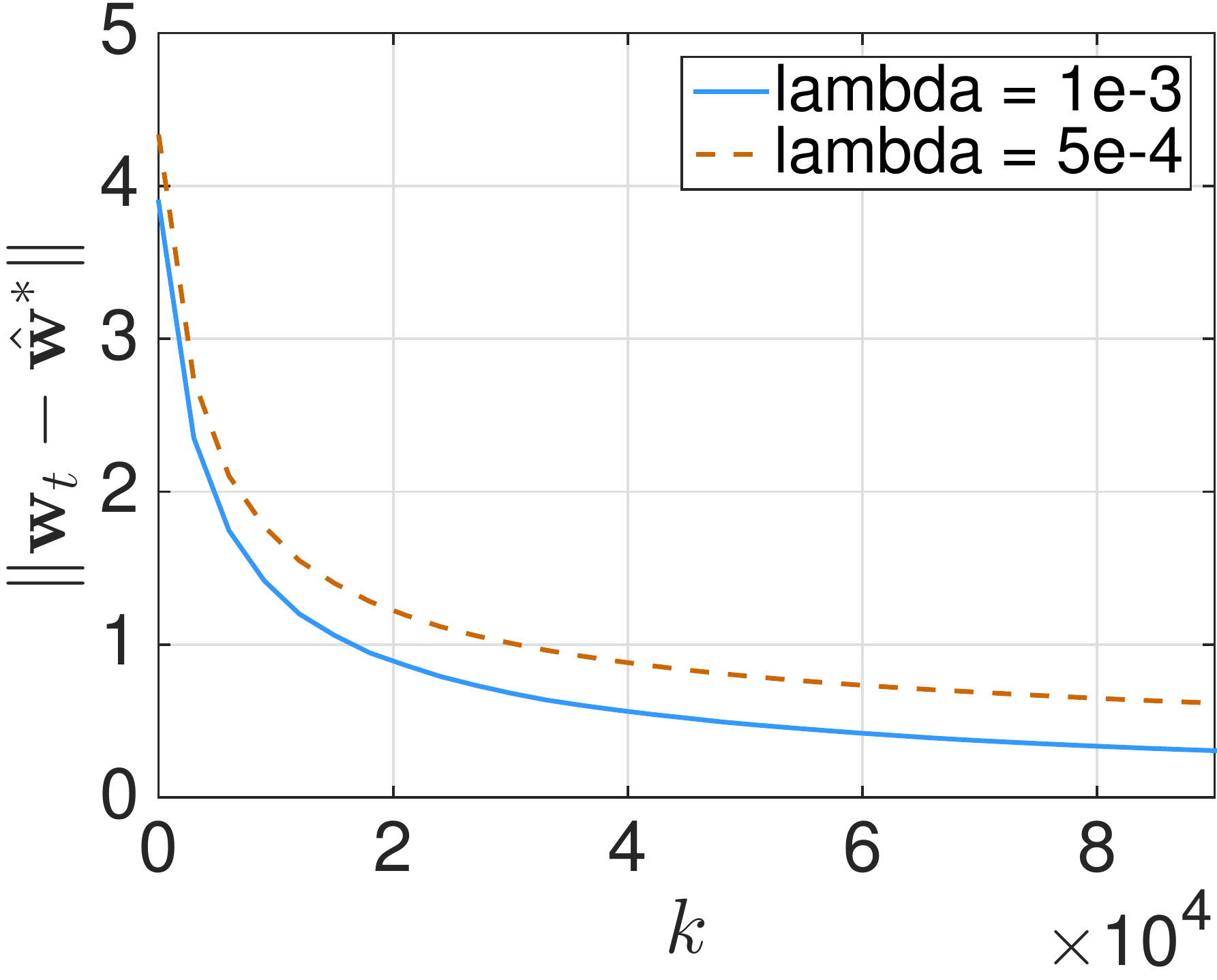}
}
\subfigure{
\includegraphics[width = 0.38\textwidth]{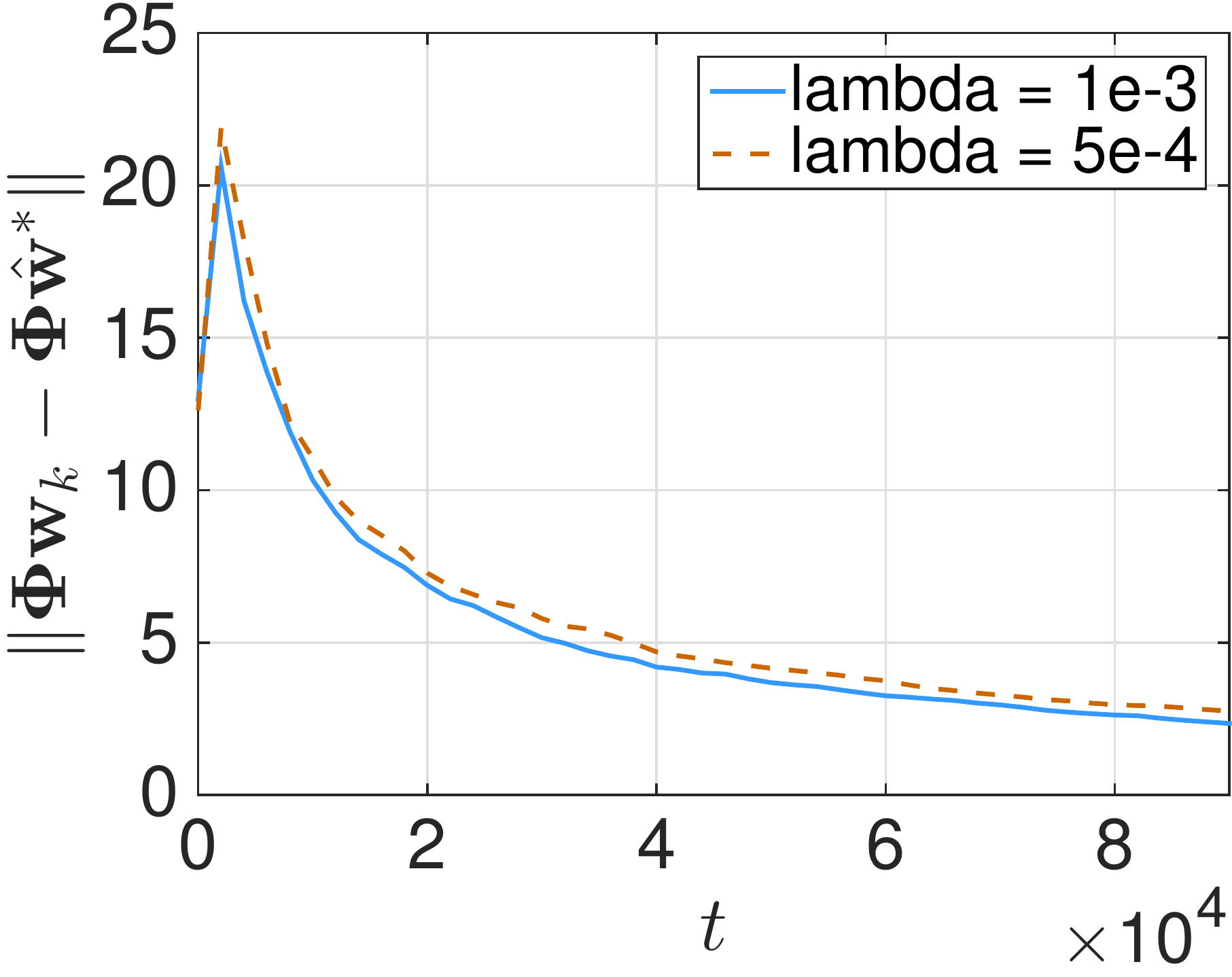}
}
\caption{Empirical convergence rate of the ASC-PG algorithm with the $\ell_1$-regularization term $\lambda\|\wb\|_1$ under Experiment 3 averaged over 100 runs, where $\wb_k$ denotes the solution at the $t$-th iteration.}
\label{fig:l1}
\end{figure}
\vspace{-10pt}

\section{Conclusion}

We develop a proximal gradient method for the penalized stochastic composition problem. The algorithm updates by interacting with a stochastic first-order oracle. Convergence rates are established under a variety of assumptions, which provide new rate benchmarks. Application of the ASC-PG to reinforcement learning leads to a new on-policy learning algorithm, which achieves faster convergence than the best known algorithms.
For future research, it remains open whether or under what circumstances the current $\cO(K^{-4/9})$ can be further improved. Another direction is to customize and adapt the algorithm and analysis to more specific problems arising from reinforcement learning and risk-averse optimization, in order to fully exploit the potential of the proposed method.
%
\newpage
\bibliographystyle{abbrvnat}
\bibliography{reference,SCGD}

\newpage
\begin{center}
\Large Supplemental Materials
\end{center}

\begin{lemma} \label{lem:bound:xdiff}
Under Assumption~\ref{ass:sg}, two subsequent iterates in Algorithm~\ref{alg} satisfy
\[
\|\x_k - \x_{k+1}\|^2 \leq \Theta(\alpha_k^2).
\]
\end{lemma}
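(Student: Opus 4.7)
The plan is to unfold the definition of the proximal operator and apply the optimality conditions to express $\x_{k+1} - \x_k$ as a sum of three vectors that are each controlled by $\alpha_k$ times a bounded quantity.

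First I would write the update of Algorithm~\ref{alg} as $\x_{k+1} = \prox_{\alpha_k R}(\u_k)$ with $\u_k := \x_k - \alpha_k \nabla g_{w_k}^\top(\x_k)\nabla f_{v_k}(\y_k)$. By the first-order optimality condition of the proximal minimization, there exists some $r_{k+1} \in \partial R(\x_{k+1})$ such that
\begin{equation*}
\x_{k+1} - \u_k + \alpha_k\, r_{k+1} = \0,
\end{equation*}
which rearranges to
\begin{equation*}
\x_{k+1} - \x_k = -\alpha_k\bigl(\nabla g_{w_k}^\top(\x_k)\nabla f_{v_k}(\y_k) + r_{k+1}\bigr).
\end{equation*}

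Next I would take norms and apply the triangle inequality together with the submultiplicativity $\|\nabla g_{w_k}^\top(\x_k)\nabla f_{v_k}(\y_k)\| \leq \|\nabla g_{w_k}(\x_k)\|\cdot\|\nabla f_{v_k}(\y_k)\|$. Using Assumption~\ref{ass:sg}, each of $\|\nabla g_{w_k}(\x_k)\|$, $\|\nabla f_{v_k}(\y_k)\|$, and $\|r_{k+1}\| \leq \|\partial R(\x_{k+1})\|$ is bounded by a constant with probability one, so $\|\x_k - \x_{k+1}\| \leq \Theta(\alpha_k)$, and squaring yields the claim.

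There is essentially no obstacle here — the only mild subtlety is making sure the subgradient bound in Assumption~\ref{ass:sg} legitimately applies to the particular $r_{k+1} \in \partial R(\x_{k+1})$ produced by the optimality condition, which is immediate because the assumption bounds \emph{every} element of $\partial R$ uniformly. No use of Assumptions~\ref{ass:unb}, \ref{ass:var}, or \ref{ass:lp} is needed, and no expectations must be taken since the bound is pathwise.
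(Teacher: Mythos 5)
Your proposal is correct and follows essentially the same route as the paper's own proof: write the proximal step via its first-order optimality condition to get $\x_{k+1}-\x_k = -\alpha_k(\nabla g_{w_k}^\top(\x_k)\nabla f_{v_k}(\y_k)+r_{k+1})$ with $r_{k+1}\in\partial R(\x_{k+1})$, then bound each factor by the uniform bounds in Assumption~\ref{ass:sg}. Nothing is missing.
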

\begin{proof}
From the definition of the proximal operation, we have
\begin{eqnarray*}
\x_{k+1} &=& \text{prox}_{\alpha_k R(\cdot)}(\x_k- \alpha_k\nabla g^\top_{w_k}(\x_k) \nabla f_{v_k} (\y_k)) 
\\ &=& \argmin_x~{1\over 2}\|\x- \x_k +\alpha_k\nabla g^\top_{w_k}(\x_k) \nabla f_{v_k} (\y_k)\|^2 + \alpha_kR(\x).
\end{eqnarray*}
The optimality condition suggests the following equality: 
\begin{align}
\x_{k+1} - \x_k = -\alpha_k(\nabla g^\top_{w_k}(\x_k) \nabla f_{v_k} (\y_k) + \s_{k+1})
\end{align}
where $\s_{k+1} \in \partial R(\x_{k+1})$ is some vector in the sub-differential set of $R(\cdot)$ at $\x_{k+1}$. Then apply the boundedness condition in Assumption~\ref{ass:sg} to yield
\begin{eqnarray*}
\|\x_{k+1} - \x_k\| &=& \alpha_k\|(\nabla g^\top_{w_k}(\x_k) \nabla f_{v_k} (\y_k) + \s_{k+1})\|
\\ 
&\leq & \alpha_k (\|\nabla g^\top_{w_k}(\x_k) \nabla f_{v_k} (\y_k)\| + \|\s_{k+1}\|)
\\
&\leq & \alpha_k (\|\nabla g^\top_{w_k}(\x_k)\| \|\nabla f_{v_k} (\y_k)\| + \|\s_{k+1}\|)
\\
&\stackrel{(\text{Assumption~\ref{ass:sg}})}{\leq} & \Theta(1)\alpha_k,
\end{eqnarray*}
which implies the claim.
\end{proof}

\begin{lemma} \label{lem:bound:gF}
Under Assumptions~\ref{ass:sg} and \ref{ass:lp}, we have
\begin{eqnarray*}
\|\nabla g^{\top}_{w}(\x) \nabla_{v} f(g(\x)) - \nabla g^\top_{w}(\x) \nabla_{v} f (\y))\| &\leq& \Theta(L_f\|\y - g(\x)\|).
\end{eqnarray*}
\end{lemma}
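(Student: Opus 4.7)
The plan is to prove this bound by a single factorization followed by two off-the-shelf estimates. First I would pull out the common factor $\nabla g_w^\top(\x)$ on the left of both terms, so that
\[
\nabla g_w^\top(\x)\nabla f_v(g(\x)) - \nabla g_w^\top(\x)\nabla f_v(\y) = \nabla g_w^\top(\x)\bigl(\nabla f_v(g(\x)) - \nabla f_v(\y)\bigr),
\]
and then apply the sub-multiplicative inequality for the operator/Euclidean norm to split the right-hand side into $\|\nabla g_w^\top(\x)\|\cdot\|\nabla f_v(g(\x))-\nabla f_v(\y)\|$.

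Next I would bound each factor. For the first factor, Assumption~\ref{ass:sg} directly gives $\|\nabla g_w(\x)\|\le \Theta(1)$ uniformly in $\x$ and $w$, so $\|\nabla g_w^\top(\x)\|\le \Theta(1)$ as well. For the second factor, the Lipschitz condition on the outer gradient (part 2 of Assumption~\ref{ass:lp}) yields $\|\nabla f_v(g(\x))-\nabla f_v(\y)\|\le L_f\|g(\x)-\y\|$. Combining the two gives the claimed bound $\Theta(L_f\|\y-g(\x)\|)$.

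There is essentially no obstacle here; the entire argument is a one-line application of the triangle/sub-multiplicative inequality together with the two assumptions, and it is only used downstream to turn the $\y$-tracking error $\|\y_k - g(\x_k)\|$ into a gradient estimation error. The only minor point of care is that the norm $\|\nabla g_w^\top(\x)\|$ is the induced matrix norm dual to the vector norm used on the right-hand side, which is automatic for the Euclidean norm since $\|M^\top\|=\|M\|$. No separate handling of the inner/outer randomness is needed because the inequality is purely deterministic in the realizations $v,w$.
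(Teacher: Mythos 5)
Your proposal is correct and follows essentially the same route as the paper's proof: factor out $\nabla g_w^\top(\x)$, apply sub-multiplicativity, bound the first factor by $\Theta(1)$ via Assumption~\ref{ass:sg}, and bound the second by $L_f\|g(\x)-\y\|$ via the Lipschitz condition on $\nabla f_v$ in Assumption~\ref{ass:lp}. No gaps; your remark about $\|M^\top\|=\|M\|$ for the Euclidean operator norm is a fine (if implicit in the paper) detail.
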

\begin{proof}
We have
\begin{eqnarray*}
\|\nabla g_w^\top(\x)\nabla f_v(g(\x)) - \nabla g_w^\top(\x) \nabla f_v (\y))\|
 &\leq &
\|\nabla g_w^\top (\x) \| \|\nabla f_v(g(\x)) - \nabla f_v(\y)\|
\\ & \stackrel{(\text{Assumption~\ref{ass:sg}})}{\leq}&
\Theta(1) \|\nabla f_v(g(\x)) - \nabla f_v(\y)\|
\\ & \stackrel{(\text{Assumption~\ref{ass:lp}})}{\leq}&
\Theta(L_f) \|\y - g(\x)\|.
\end{eqnarray*}
It completes the proof.
\end{proof}

\begin{lemma} \label{lem:gen-seq}
Given a positive sequence $\{w_k\}_{k=1}^{\infty}$ satisfying 
\begin{align}
w_{k+1} \leq (1-\beta_k + C_1 \beta_k^2)w_k + C_2 k^{-a} 
\end{align}
where $C_1\geq 0$, $C_2 \geq 0$, and $a \geq 0$. Choosing $\beta_k$ to be $\beta_k = C_3 k^{-b}$ where $b\in (0, 1]$ and $C_3> 2$,  the sequence can be bounded by
\[
w_{k} \leq C k^{-c} 
\]
where $C$ and $c$ are defined as 
\[
C := \max_{k \leq (C_1 C_3^2)^{1/b} + 1} w_k k^{c} + {C_2 \over C_3 -2} \quad \text{and} \quad c := a-b.
\]
In other words, we have
\[
w_k \leq \Theta(k^{-a+b}).
\]
\end{lemma}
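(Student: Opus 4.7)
The plan is to establish $w_k \leq C k^{-c}$ by induction on $k$, with the threshold $K_0 := (C_1 C_3^2)^{1/b} + 1$ and the constant $C$ defined as in the statement playing the role of the invariant. The base case is built into the definition: every $k \leq K_0$ automatically satisfies $w_k k^c \leq C$, because $C$ was defined as the maximum of $w_k k^c$ on this finite prefix plus the nonnegative slack $C_2/(C_3-2)$.

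For the inductive step, fix $k \geq K_0$ and substitute the hypothesis $w_k \leq Ck^{-c}$, along with $\beta_k = C_3 k^{-b}$ and $k^{-a} = k^{-c-b}$, into the recursion:
$w_{k+1} \leq (1 - \beta_k + C_1\beta_k^2)\,Ck^{-c} + C_2 k^{-c-b}.$
The threshold $K_0$ is engineered precisely so that $C_1\beta_k \leq 1/C_3$, which allows the quadratic perturbation to be absorbed into the linear term: $1 - \beta_k + C_1\beta_k^2 \leq 1 - \beta_k(C_3-1)/C_3 = 1 - (C_3-1)k^{-b}$. After simplification this yields
$w_{k+1} \leq Ck^{-c} - k^{-c-b}\bigl[(C_3-1)C - C_2\bigr].$

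To close the induction I need to replace $Ck^{-c}$ on the right by $C(k+1)^{-c}$, i.e.\ to show that the subtracted term dominates $C[k^{-c} - (k+1)^{-c}]$. By the mean value theorem applied to $t \mapsto t^{-c}$, $k^{-c} - (k+1)^{-c} \leq c\, k^{-c-1}$ in the regime $c \in (0,1]$, while for $c \leq 0$ the difference is nonpositive and the step is automatic. Using $b \leq 1$ so that $k^{b-1} \leq 1$, the required inequality reduces to $(C_3 - 1 - c)C \geq C_2$, which is implied by $C \geq C_2/(C_3 - 2)$ since $c \leq 1$ whenever $a,b \in (0,1]$. Both conditions hold by construction of $C$, completing the induction.

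The main obstacle is the careful calibration of the three numerical constants so that $C$ reproduces itself exactly at step $k+1$ rather than inflating. The threshold $(C_1 C_3^2)^{1/b}+1$ is precisely what is needed to swallow the quadratic term $C_1\beta_k^2$ into a fraction $1/C_3$ of the linear one, costing one unit in the effective decay coefficient; the mean-value-theorem step costs a second unit, producing the tell-tale denominator $C_3 - 2$ in the formula for $C$. Everything else is routine algebraic bookkeeping.
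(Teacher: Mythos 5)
Your proposal is correct and follows essentially the same route as the paper's proof: induction with the base case absorbed into the definition of $C$, the threshold $(C_1C_3^2)^{1/b}$ used to dominate the quadratic term $C_1\beta_k^2$ by a fraction of the linear one, a convexity/mean-value bound on $k^{-c}-(k+1)^{-c}$, and the final reduction to $C \geq C_2/(C_3-2)$. The only cosmetic difference is that you absorb the quadratic term before invoking the mean value theorem, whereas the paper collects everything into a single quantity $\Delta$ and bounds it; you are also slightly more explicit about the implicit requirement $c\le 1$ and the trivial case $c\le 0$.
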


\begin{proof}
We prove it by induction. First it is easy to verify that the claim holds for $k\leq (C_1C_3^2)^{1/b}$ from the definition for $C$. Next we prove from ``$k$'' to ``$k+1$'', that is, given $w_k \leq C k^{-c}$ for $k > (C_1C_3^2)^{1/b}$, we need to prove $w_{k+1} \leq C(k+1)^{-c}$.
\begin{eqnarray}
\nonumber
w_{k+1} & \leq & (1-\beta_k + C_1\beta_k^2) w_k + C_2 k^{-a}
\\ \nonumber & \leq & 
(1-C_3k^{-b} + C_1C_3^2 k^{-2b}) C k^{-c} + C_2 k^{-a}
\\ & = & 
C k^{-c} - CC_3k^{-b-c} + CC_1C_3^2 k^{-2b -c} + C_2 k^{-a}.
\label{eq:lemma:basicseq}
\end{eqnarray}
To prove that \eqref{eq:lemma:basicseq} is bounded by $C(k+1)^{-c}$, it suffices to show that
\[
\Delta := (k+1)^{-c} - k^{-c} + C_3k^{-b-c} - C_1C_3^2 k^{-2b-c} > 0\quad \text{and}\quad
C \geq \frac{C_2 k^{-a}}{\Delta}.
\]
From the convexity of function $h(t)=t^{-c}$, we have the inequality $(k+1)^{-c} - k^{-c} \geq (-c)k^{-c-1}$. Therefore we obtain 
\begin{eqnarray*}
\Delta & \geq & -c k^{-c-1} + C_3 k^{-b-c} - C_1 C_3^2 k^{-2b-c}
\\ &\stackrel{(b\leq 1,~k>(C_1C_3^2)^{1/b})}{\geq} & (C_3-2)(k^{-b-c}) 
\\ & \stackrel{(C_3 > 2)}{>} & 0.
\end{eqnarray*}
To verify the second one, we have
\[
\frac{C_2 k^{-a}}{\Delta}\leq \frac{C_2}{C_3-2} k^{-a+b+c} \stackrel{(c=a+b)}{=} \frac{C_2}{C_3-2} \leq C
\]
where the last inequality is due to the definition of $C$. It completes the proof.
\end{proof}

\begin{lemma} \label{lem:y}
Choose $\beta_k$ to be $\beta_k = C_b k^{-b}$ where $C_b>2$, $b\in (0,1]$, and $\alpha_k = C_a k^{-a}$. Under Assumptions~\ref{ass:unb} and \ref{ass:var}, we have
\begin{align}
\E(\|\y_{k} - g(\x_{k})\|^2) \leq L_g\Theta(k^{-4a+4b}) + \Theta(k^{-b}).
 \end{align}
\end{lemma}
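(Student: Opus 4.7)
The plan is to set $e_k := \y_k - g(\x_k)$ and derive a scalar recursion on $M_k := \E\|e_k\|^2$ that fits the hypothesis of Lemma~\ref{lem:gen-seq}. Unfolding the $\y$-update gives
\[
e_{k+1} = (1-\beta_k)e_k + \beta_k\bigl(g_{w_{k+1}}(\z_{k+1}) - g(\z_{k+1})\bigr) + r_k,
\]
where $r_k := (1-\beta_k)g(\x_k) + \beta_k g(\z_{k+1}) - g(\x_{k+1})$ is deterministic given the history through step $k$, while the middle term is conditionally mean-zero with conditional second moment at most $\sigma^2$ by Assumptions~\ref{ass:unb} and~\ref{ass:var}.

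The central step is a sharp bound on $\|r_k\|$. The $\z$-update is algebraically equivalent to $\x_{k+1} = (1-\beta_k)\x_k + \beta_k\z_{k+1}$, so Taylor-expanding $g(\x_k)$ and $g(\z_{k+1})$ around $\x_{k+1}$ via Assumption~\ref{ass:lp}.3 and forming the convex combination causes the first-order term $\nabla g(\x_{k+1})^\top[(1-\beta_k)(\x_k-\x_{k+1}) + \beta_k(\z_{k+1}-\x_{k+1})]$ to vanish exactly, leaving only the quadratic remainders $r_k = (1-\beta_k)R_1 + \beta_k R_2$ with $\|R_1\|\le (L_g/2)\|\x_k-\x_{k+1}\|^2$ and $\|R_2\|\le (L_g/2)\|\z_{k+1}-\x_{k+1}\|^2$. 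Using the identity $\z_{k+1}-\x_{k+1} = (1/\beta_k - 1)(\x_k - \x_{k+1})$ and Lemma~\ref{lem:bound:xdiff} gives $\|R_1\|=O(L_g\alpha_k^2)$ and $\|R_2\|=O(L_g\alpha_k^2/\beta_k^2)$; the essential point is that the explicit $\beta_k$ in front of $R_2$ then yields
\[
\|r_k\| \le (1-\beta_k)\|R_1\| + \beta_k\|R_2\| = O\bigl(L_g\alpha_k^2/\beta_k\bigr) = O(L_g k^{-2a+b}).
\]

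Squaring the recursion, taking expectations, and using that the noise term is conditionally mean-zero (so its cross terms with $e_k$ and $r_k$ drop out) yields
\[
M_{k+1} \le (1-\beta_k)^2 M_k + 2(1-\beta_k)\E\langle e_k, r_k\rangle + \beta_k^2\sigma^2 + \|r_k\|^2.
\]
Applying Young's inequality $2(1-\beta_k)|\langle e_k,r_k\rangle| \le \beta_k(1-\beta_k)\|e_k\|^2 + \beta_k^{-1}(1-\beta_k)\|r_k\|^2$ to the cross term collapses this to a recursion
\[
M_{k+1} \le (1-\beta_k + \beta_k^2) M_k + C\beta_k^2\sigma^2 + C\beta_k^{-1}\|r_k\|^2,
\]
matching the hypothesis of Lemma~\ref{lem:gen-seq} with driving terms of order $k^{-2b}$ and $L_g^2 k^{-4a+3b}$. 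Invoking Lemma~\ref{lem:gen-seq} termwise then produces the two bounds $\Theta(k^{-b})$ and $L_g\cdot\Theta(k^{-4a+4b})$, whose sum is the stated estimate.

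The main obstacle is the bias bound itself. Without the exact cancellation of the first-order Taylor term enabled by the extrapolation-smoothing coupling $\x_{k+1}=(1-\beta_k)\x_k+\beta_k\z_{k+1}$, and without retaining the explicit $\beta_k$ weight in front of $R_2$ when bounding $\|r_k\|$, one would only obtain $\|r_k\|=O(L_g\alpha_k^2/\beta_k^2)$, and the resulting driver $O(L_g^2 k^{-4a+5b})$ would feed through Lemma~\ref{lem:gen-seq} to give $O(k^{-4a+6b})$, which is far too slow to support the main theorems at the parameter choice $a=5/9,b=4/9$.
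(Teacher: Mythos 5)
Your proof is correct and arrives at the stated bound, but it takes a genuinely different route from the paper's. The paper unrolls the $\y$-recursion over the whole history using the weights $\xi_t^{(k)}$ and invokes Lemmas 10--11 of \citet{wang2014stochastic} to split the error into a weighted bias term $m_k=\sum_{t}\xi_t^{(k-1)}\|\x_{k}-\z_{t+1}\|^2$ and a martingale term $n_k$, each of which is then run through Lemma~\ref{lem:gen-seq} separately; you instead write a self-contained one-step recursion for $\E(\|\y_k-g(\x_k)\|^2)$ with a deterministic drift $r_k$ plus a conditionally mean-zero noise term, and feed that single recursion into Lemma~\ref{lem:gen-seq}. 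The essential mechanism is identical in both arguments: the first-order Taylor term cancels because $\x_{k+1}$ is exactly the $(1-\beta_k,\beta_k)$-combination of $\x_k$ and $\z_{k+1}$ (in the paper's unrolled form, because $\x_{k}=\sum_t\xi_t^{(k-1)}\z_{t+1}$), leaving only the $L_g$-quadratic remainders, and you correctly isolate the point that retaining the explicit $\beta_k$ weight on the $\z$-side remainder is what turns $\alpha_k^2/\beta_k^2$ into $\alpha_k^2/\beta_k$ and hence yields $k^{-4a+4b}$ rather than a useless bound. Your version is more elementary and avoids the external citations, which is a genuine gain. Three minor points, none of which is a gap: your bound carries $L_g^2\Theta(k^{-4a+4b})$ where the paper writes $L_g\Theta(k^{-4a+4b})$, which is immaterial since only the exponent and the vanishing at $L_g=0$ are used downstream (the paper's own constant bookkeeping at this step is equally loose); for the finitely many $k$ with $\beta_k=C_bk^{-b}>1$ the factor $1-\beta_k$ is negative and your Young-inequality step should carry $|1-\beta_k|$, but this is absorbed into the initial-segment constant $C$ of Lemma~\ref{lem:gen-seq}; and, exactly as in the paper's proof, Lemma~\ref{lem:gen-seq} implicitly requires the driving exponents to be nonnegative ($4a\ge 3b$ for you, $2a\ge b$ for the paper), which holds for every parameter choice used in Theorems~\ref{thm:nonconvex} and~\ref{thm:strconvex}.
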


\begin{proof}
Denote by $m_{k+1}$
\[
m_{k+1} := \sum_{t=0}^k \xi_t^{(k)} \|\x_{k+1} - \z_{t+1}\|^2
\] 
and $n_{k+1}$
\[
n_{k+1} := \left\|\sum_{t=0}^k \xi_{t}^{(k)} (g_{w_{t+1}}(\z_{t+1}) - g(\z_{t+1})) \right\|
\] 
for short.

From Lemma~10 in \citep{wang2014stochastic}, we have
\begin{eqnarray}
\|\y_{k} - g(\x_{k})\|^2 \leq \left({L_g\over 2} m_{k} + n_{k}\right)^2 \leq L_g m_{k}^2 + 2n_{k}^2.
\label{eq:proof:lem:0}
\end{eqnarray}

From Lemma~11 in \citep{wang2014stochastic}, $m_{k+1}$ can be bounded by
\begin{eqnarray}
\label{eq:proof:lem:y:1}
m_{k+1} &\leq& (1-\beta_k) m_k + \beta_k q_k + {2 \over \beta_k} \|\x_k - \x_{k+1}\|^2
\end{eqnarray}
where $q_k$ is bounded by 
\begin{eqnarray*}
q_{k+1} & \leq & (1-\beta_k) q_k + {4\over \beta_k} \|\x_{k+1} - \x_k\|^2
\\ & \stackrel{\text{(Lemma~\ref{lem:bound:xdiff})}}{\leq} & 
(1-\beta_k) q_k + {\Theta(1) \alpha_k^2\over \beta_k}
\\ & \leq &
(1-\beta_k) q_k + \Theta(k^{-2a+b}).
\end{eqnarray*}
Use Lemma~\ref{lem:gen-seq} and obtain the following decay rate
\[
q_{k} \leq \Theta(k^{-2a+2b}).
\]
Together with \eqref{eq:proof:lem:y:1}, we have
\begin{eqnarray*}
m_{k+1} &\leq& (1-\beta_k) m_k + \beta_k q_k + {2 \over \beta_k} \|\x_k - \x_{k+1}\|^2
\\ & \leq & (1-\beta_k) m_k + \Theta(k^{-2a+b}) + \Theta(k^{-2a+b})
\\ & \leq & (1-\beta_k) m_k + \Theta(k^{-2a+b}),
\end{eqnarray*}
which leads to
\begin{equation}
m_{k} \leq \Theta(k^{-2a + 2b})\quad \text{and} \quad m_{k}^2 \leq \Theta(k^{-4a+4b}).
\label{eq:proof:m:bound}
\end{equation}
by using Lemma~\ref{lem:gen-seq} again. Then we estimate the upper bound for $\E(n_k^2)$. From Lemma~11 in \citep{wang2014stochastic}, we know $\E(n_k^2)$ is bounded by
\[
\E(n_{k+1}^2) \leq (1-\beta_k)^2 \E(\|n_k\|^2) + \beta_k^2 \sigma_g^2 = (1-2\beta_k + \beta_k^2) \E(\|n_k\|^2) + \beta_k^2 \sigma_g^2.
\]
By using Lemma~\ref{lem:gen-seq} again, we have 
\begin{equation}
\E(n_{k}^2) \leq \Theta(k^{-b}).
\label{eq:proof:lem:n}
\end{equation}

Now we are ready to estimate the upper bound of $\|\y_{k+1} - g(\x_{k+1})\|^2$ by following \eqref{eq:proof:lem:0}
\begin{eqnarray*}
\E(\|\y_{k} - g(\x_{k})\|^2) & \leq & {L_g}\E(m_{k}^2) + 2\E(n^2_{k})
\\ & \stackrel{\eqref{eq:proof:m:bound} + \eqref{eq:proof:lem:n}}{\leq} &
L_g\Theta(k^{-4a+4b}) + \Theta(k^{-b}).
\end{eqnarray*}
It completes the proof.
\end{proof}

{\bf \noindent Proof to Theorem~\ref{thm:nonconvex}}
\begin{proof}
From the Lipschitzian condition in Assumption~\ref{ass:lp}, we have
\begin{eqnarray}
\nonumber
& &F(\x_{k+1}) - F(\x_k) 
\\ \nonumber & \leq & 
\la \nabla F(\x_k),~ \x_{k+1} - \x_k\ra + {L_F \over 2} \|\x_{k+1} - \x_k\|^2
\\  \nonumber & \stackrel{\text{(Lemma~\ref{lem:gen-seq})}}{\leq} & 
-\alpha_k \la \nabla F(\x_k),~ \Gg_{w_k}^\top(\x_k) \Gf_{v_k}(\y_{k}) \ra + \Theta(\alpha_k^2)
\\ & = & 
-\alpha_k \|\nabla F(\x_k)\|^2 + \alpha_k \underbrace{\la \nabla F(\x_k),~ \nabla F(\x_k) - \Gg_{w_k}^\top(\x_k) \Gf_{v_k}(\y_{k}) \ra}_{=:T} \nonumber\\
&&+ \Theta(\alpha_k^2)
\label{eq:proof:nonconvex:1}
\end{eqnarray}
Next we estimate the upper bound for $\E(T)$:
\begin{eqnarray*}
\E(T) & = & \E(\la \nabla F(\x_k),~ \nabla F(\x_k) - \Gg_{w_k}^\top(\x_k) \Gf_{v_k}(g(\x_k)) \ra) 
\\ && 
+ \E(\la \nabla F(\x_k), \Gg_{w_k}^\top(\x_k) \Gf_{v_k}(g(\x_k)) - \Gg_{w_k}^\top(\x_k) \Gf_{v_k}(\y_{k}) \ra )  
\\ & \stackrel{\text{(Assumption \ref{ass:unb})}}{=} &
\E(\la \nabla F(\x_k), \Gg_{w_k}^\top(\x_k) \Gf_{v_k}(g(\x_k)) - \Gg_{w_k}^\top(\x_k) \Gf_{v_k}(\y_{k}))\ra )  
\\ & \leq & 
{1 \over 2} \E(\|\nabla F(\x_k)\|^2) + {1 \over 2} \E(\|\Gg_{w_k}^\top(\x_k) \Gf_{v_k}(g(\x_k)) - \Gg_{w_k}^\top(\x_k) \Gf_{v_k}(\y_{k})\|^2)
\\ & \stackrel{\text{(Lemma~\ref{lem:bound:gF})}}{\leq} &
{1 \over 2}\E(\|\nabla F(\x_k)\|^2) + \Theta(L_f^2) \E(\|\y_{k} - g(\x_k)\|^2).
\end{eqnarray*}
Take expectation on both sides of \eqref{eq:proof:nonconvex:1} and substitute $\E(T)$ by its upper bound:
\begin{eqnarray*}
&&{\alpha_k\over 2} \|\nabla F(\x_k)\|^2 
\\ &\leq& 
\E(F(\x_k)) -  \E(F(\x_{k+1})) + \Theta(L_f^2\alpha_k) \E(\|\y_{k} - g(\x_k)\|^2) + \Theta(\alpha_k^2) 
\\ & \stackrel{\text{(Lemma~\ref{lem:y})}}{\leq} &
\E(F(\x_k)) -  \E(F(\x_{k+1})) + L_g\Theta(L_f^2\alpha_k) \Theta(k^{-4a+4b}) + \Theta(L_f^2\alpha_k k^{-b}) + \Theta(\alpha_k^2)
\\ & \leq &
\E(F(\x_k)) -  \E(F(\x_{k+1})) + L_f^2L_g\Theta(k^{-5a+4b}) + L_f^2\Theta(k^{-a-b}) + \Theta(k^{-2a}) 
\end{eqnarray*}
which suggests that
\begin{eqnarray}
\nonumber
&& \E(\|\nabla F(\x_k)\|^2) \\& \leq & 
2\alpha_k^{-1}\E(F(\x_k)) - 2\alpha_k^{-1} \E(F(\x_{k+1})) + L_f^2L_g\Theta(k^{-4a+4b}) + L_f^2\Theta(k^{-b}) + \Theta(k^{-a}) 
\nonumber \\ & \leq & 
{2k^a}\E(F(\x_k)) - {2k^a} \E(F(\x_{k+1})) + L_f^2 L_g\Theta(k^{-4a+4b}) + L_f^2 \Theta(k^{-b}) + \Theta(k^{-a})
\label{eq:proof:nonconvex:dF}
\end{eqnarray}


Summarize Eq.~\eqref{eq:proof:nonconvex:dF} from $k=1$ to $K$ and obtain
\begin{eqnarray*}
\frac{\sum_{k=1}^K \E(\|\nabla F(\x_k)\|^2)}{K} & \leq & 2K^{-1}\alpha_1^{-1}F(\x_1) + {K^{-1}}\sum_{k=2}^{K} ((k+1)^a - k^a) \E(F(\x_{k}))  
\\  && + K^{-1}\sum_{k=1}^K L_f^2 L_g\Theta(k^{-4a+4b}) + {K^{-1}}L_f^2 \sum_{k=1}^K \Theta(k^{-b}) + K^{-1}\sum_{k=1}^K \Theta(k^{-a})
\\ & \leq & 
2K^{-1}F(\x_0) + {K^{-1}}\sum_{k=2}^{K} ak^{a-1} \E(F(\x_{k}))  
\\  && + K^{-1}\sum_{k=1}^K L_f^2 L_g\Theta(k^{-4a+4b}) + {K^{-1}}L_f^2 \sum_{k=1}^K \Theta(k^{-b}) + K^{-1}\sum_{k=1}^K \Theta(k^{-a})
\\ & \leq &
O(K^{a-1} + L_f^2 L_gK^{4b-4a}\I_{4a-4b=1}^{\log K} + L_f^2 K^{-b}+ K^{-a}),
\end{eqnarray*}
where the second inequality uses the fact that $h(t) = t^a$ is a concave function suggesting $(k+1)^a \leq k^a + ak^{a-1}$, and the last inequality uses the condition $\E(F(\x_k)) \leq \Theta(1)$.

The optimal $a^* = 5/9$ and the optimal $b^* = 4/9$, which leads to the convergence rate $O(K^{-4/9})$.
\end{proof}

{\bf \noindent Proof to Theorem~\ref{thm:strconvex}}
\begin{proof}
Following the line of the proof to Lemma~\ref{lem:bound:xdiff}, we have
\begin{align}
\x_{k+1} - \x_k = -\alpha_k(\nabla g^\top_{w_k}(\x_k) \nabla f_{v_k} (\y_k) + \s_{k+1})
\label{eq:proof:thm2:opt}
\end{align}
where $\s_{k+1} \in \partial R(\x_{k+1})$ is some vector in the sub-differential set of $R(\cdot)$ at $\x_{k+1}$. Then we consider $\|\x_{k+1} - \P_{X^*}(\x_{k+1})\|^2$:
\begin{eqnarray}
\nonumber
&& \|\x_{k+1} - \P_{X^*}(\x_{k+1})\|^2 
\\ &\leq& \|\x_{k+1} - \x_{k} + \x_k - \P_{X^*}(\x_k)\|^2
\nonumber
\\ &=&
\|\x_k - \P_{X^*}(\x_k)\|^2 - \|\x_{k+1} - \x_k\|^2 + 2\la \x_{k+1} - \x_k, \x_{k+1} - \P_{X^*}(\x_k) \ra
\nonumber
\\ &\stackrel{\eqref{eq:proof:thm2:opt}}{=}&
\|\x_k - \P_{X^*}(\x_k)\|^2 - \|\x_{k+1} - \x_k\|^2 - 2\alpha_k \la \nabla g^\top_{w_k}(\x_k) \nabla f_{v_k} (\y_k) + \s_{k+1},~\x_{k+1} - \P_{X^*}(\x_k) \ra
\nonumber
\\ &= &
\|\x_k - \P_{X^*}(\x_k)\|^2 - \|\x_{k+1} - \x_k\|^2 + 2\alpha_k \la \nabla g^\top_{w_k}(\x_k) \nabla f_{v_k} (\y_k),  \P_{X^*}(\x_k) - \x_{k+1} \ra 
\nonumber
\\ &&
+ 2\alpha_k \la \s_{k+1},~\P_{X^*}(\x_k) - \x_{k+1}\ra
\nonumber
\\ &\leq&
\|\x_k - \P_{X^*}(\x_k)\|^2 - \|\x_{k+1} - \x_k\|^2 + 2\alpha_k \la \nabla g^\top_{w_k}(\x_k) \nabla f_{v_k} (\y_k),  \P_{X^*}(\x_k) - \x_{k+1} \ra 
\nonumber \\ &  &
 + 2\alpha_k (R(\P_{X^*}(\x_k)) - R(\x_{k+1}))\quad\quad {(\text{due to the convexity of $R(\cdot)$})}
\nonumber
\\ &\leq &
\|\x_k - \P_{X^*}(\x_k)\|^2 - \|\x_{k+1} - \x_k\|^2 + 2\alpha_k \underbrace{\la \nabla F(\x_k),  \P_{X^*}(\x_k) - \x_{k+1} \ra}_{T_1}
\nonumber
\\ &&
+ 2\alpha_k \underbrace{\la \nabla g^\top_{w_k}(\x_k) \nabla f_{v_k} (\y_k) - \nabla F(\x_k),~\P_{X^*}(\x_k) - \x_{k+1}\ra}_{T_2}
\nonumber
\\ &&
+ 2\alpha_k (R(\P_{X^*}(\x_k)) - R(\x_{k+1}))
\label{eq:proof:thm2:2}
\end{eqnarray}
where the second equality follows from $\|a+b\|^2 = \|b\|^2 - \|a\|^2 + 2\la a,~a+b \ra$ with $a=\x_{k+1} - \x_k$ and $b = \x_k - \P_{X^*}(\x_k)$. We next estimate the upper bound for $T_1$ and $T_2$ respectively:
\begin{eqnarray*}
T_1 & = & \la \nabla F(\x_k),~\x_k - \x_{k+1} \ra + \la \nabla F(\x_k),~-\x_k + \P_{X^*}(\x_k)\ra
\\ & {\leq} &
\underbrace{F(\x_k) - F(\x_{k+1}) + {L_F\over 2} \|\x_{k+1} - \x_k\|^2}_{{\text{due to Assumption}~\ref{ass:lp}}} + \underbrace{F(\P_{X^*}(\x_k)) - F(\x_{k})}_{\text{due to the convexity of $F(\cdot)$}}
\\ & = &
F(\P_{X^*}(\x_k)) - F(\x_{k+1}) + {L_F\over 2} \|\x_{k+1} - \x_k\|^2
\\ &\leq& 
F(\P_{X^*}(\x_k)) - F(\x_{k+1}) + \Theta(\alpha_k^2),
\end{eqnarray*}
where the last inequality uses Lemma~\ref{lem:bound:xdiff}.

\begin{eqnarray*}
T_2 & = & \la \nabla F(\x_k)- \nabla g^\top_{w_k}(\x_k) \nabla f_{v_k} (\y_k),  \x_k - \P_{X^*}(\x_k) \ra + \la \nabla F(\x_k) -  \nabla g^\top_{w_k}(\x_k) \nabla f_{v_k} (\y_k),~ \x_{k+1} - \x_k \ra
\\ & \leq & 
\underbrace{ \la \nabla F(\x_k) - \nabla g^\top_{w_k}(\x_k) \nabla f_{v_k}(g(\x_k)),~\x_k - \P_{X^*}(\x_k) \ra }_{T_{2,1}}
 \\ && +  \underbrace{\la \nabla g^\top_{w_k}(\x_k) \nabla f_{v_k}(g(\x_k)) - \nabla g^\top_{w_k}(\x_k) \nabla f_{v_k} (\y_k),~\x_k - \P_{X^*}(\x_k) \ra}_{T_{2,2}}
 \\ && + {\alpha_k \over 2}\underbrace{\|\nabla F(\x_k) -  \nabla g^\top_{w_k}(\x_k) \nabla f_{v_k} (\y_k)\|^2}_{T_{2,3}} + {1 \over 2 \alpha_k}\|\x_k - \x_{k+1}\|^2
\end{eqnarray*}
where the last line is due to the inequality $\la a,b \ra \leq {1\over 2\alpha_k}\|a\|^2 + {\alpha_k \over 2}\|b\|^2$. For $T_{2,1}$, we have $\E(T_{2,1}) =0 $ due to Assumption~\ref{ass:unb}. For $T_{2,2}$, we have 
\begin{eqnarray*}
T_{2,2} & {\leq} &  {\alpha_k \over 2 \phi_k}\|\nabla g^\top_{w_k}(\x_k) \nabla f_{v_k}(g(\x_k)) - \nabla g^\top_{w_k}(\x_k) \nabla f_{v_k} (\y_k)\|^2 + {\phi_k \over 2\alpha_k} \|\x_k - \P_{X^*}(\x_k)\|^2
\\ & \stackrel{\text{(Lemma~\ref{lem:bound:gF})}}{\leq} & \Theta\left(L_f^2{\alpha_k\over \phi_k}\right) \|\y_k - g(\x_k)\|^2 + {\phi_k \over 2\alpha_k} \|\x_k - \x_{k+1}\|^2.
\end{eqnarray*}
$T_{2,3}$ can be bounded by a constant
\[
T_{2,3} \leq 2\|\nabla F(\x_k)\|^2 + 2\|\nabla g^{\top}_{w_k} \nabla f_{v_k}(\y_k)\|^2 \stackrel{(\text{Assumption~\ref{ass:sg}})}{\leq} \Theta(1).
\]

Take expectation on $T_2$ and put all pieces into it:
\begin{eqnarray*}
\E(T_2) & \leq & \Theta\left(L_f^2{\alpha_k \over \phi_k}\right) \|\y_k - g(\x_k)\|^2 + {1 \over 2\alpha_k} (\phi_k\|\x_k - \P_{X^*}(\x_k)\|^2 + \|\x_k - \x_{k+1}\|^2) + \Theta(\alpha_k).
\end{eqnarray*}

Taking expectation on both sides of \eqref{eq:proof:thm2:2} and plugging the upper bounds of $T_1$ and $T_2$ into it, we obtain
\begin{align*}
&2\alpha_k (\E(H(\x_{k+1})) - H^*) + \E(\|\x_{k+1} - \P_{X^*}(\x_{k+1})\|^2) 
\\ &
\quad \leq (1+\phi_k) \E(\|\x_k - \P_{X^*}(\x_k)\|^2) + \Theta(\alpha_k^3) + \Theta(L_f^2\alpha_k^2/\phi_k) \E(\|\y_k - g(\x_k)\|^2) + \Theta(\alpha_k^2).
\end{align*}
Using the optimally strong convexity in \eqref{eq:osc}, we have
\begin{align*}
&(1+2\lambda \alpha_k) \E(\|\x_{k+1} - \P_{X^*}(\x_{k+1})\|^2) 
\\
&\quad \quad \leq (1+\phi_k) \E(\|\x_k - \P_{X^*}(\x_k)\|^2) + \Theta(\alpha_k^3) + \Theta(L_f^2\alpha_k^2/\phi_k) \E(\|\y_k - g(\x_k)\|^2) + \Theta(\alpha_k^2).
\end{align*}
It follows by dividing $1+2\lambda \alpha_k$ on both sides
\begin{align*}
& \E(\|\x_{k+1} - \P_{X^*}(\x_{k+1})\|^2) \\
& \quad \quad
 \leq \frac{1+\phi_k}{1+2\lambda \alpha_k} \E(\|\x_k - \P_{X^*}(\x_k)\|^2) + \Theta(\alpha_k^3) + \Theta(L_f^2\alpha_k^2/\phi_k) \E(\|\y_k - g(\x_k)\|^2) + \Theta(\alpha_k^2).
\end{align*}

Choosing $\phi_k = \lambda \alpha_k - 2\lambda^2 \alpha_k^2 \geq {0.5\lambda \alpha_k}$ yields 
\begin{eqnarray*}
&& \E(\|\x_{k+1} - \P_{X^*}(\x_{k+1})\|^2)
\\ & \leq & (1-\lambda \alpha_k) \E(\|\x_{k} - \P_{X^*}(\x_k)\|^2) + \Theta(\alpha_k^2) + {\Theta(L_f^2\alpha_k)\over \lambda} \E(\| g(\x_k) - \y_k\|^2)
\\ & \leq & 
(1-\lambda \alpha_k) \E(\|\x_{k} - \P_{X^*}(\x_k)\|^2) + \Theta(k^{-2a}) + \Theta(L_gL_f^2 k^{-5a+4b} + L_f^2k^{-a-b}). 
\end{eqnarray*}
Apply Lemma~\ref{lem:gen-seq} to obtain the first claim in \eqref{eq:thm:osc:1}
\[
\E(\|\x_{k} - \P_{X^*}(\x_k)\|^2) \leq O\left(k^{-a} + L_f^2L_gk^{-4a+4b} + L_f^2k^{-b}\right).
\]
The followed specification of $a$ and $b$ can easily verified.
\end{proof}

\end{document}